\documentclass[11pt,twoside,a4paper]{article}

\usepackage{amsmath,amsfonts,euscript,amssymb,amsthm,a4,times}

\unitlength1cm \textwidth16cm \oddsidemargin0.1cm \evensidemargin0.1cm


\setlength{\textheight}{217mm}

\setlength{\topmargin}{-7mm}

\def\real{\mathbb R}
\def\N{\mathbb N}
\def\Rn{{\mathbb R}^{n}}
\def\R{{\mathbb R}}
\def\RN{{\mathbb R}^{N}}
\def\M{{\mathbb R}^{N\times n}}

\def\proofof#1{\begin{proof}[Proof of #1]}

\def\dx{{\mathrm d}x}

\def\loc{\rm loc}

\def\dist{\mbox{dist }}
\def\eps{\varepsilon}
\def\teps{\tilde{\varepsilon}}
\def\tu{\tilde{u}_{\varepsilon}}
\def\ue{u_{\varepsilon}}
\def\wei{\langle Du_{\varepsilon} \rangle}

\catcode`@=11
\newbox\tr@tto
\setbox\tr@tto=\hbox{{\count0=0\dimen0=-,9pt\dimen1=1,1pt\loop\ifnum\count0<11 \advance \count0 by1 \vrule width.51pt height\dimen1
     depth\dimen0\kern-0.17pt\advance\dimen0 by-0.05pt\advance\dimen1
     by0.1pt\repeat \loop\ifnum\count0<21\advance \count0 by1 \vrule
     width.6pt height\dimen1 depth\dimen0\kern-0.2pt \advance\dimen0
     by-0.1pt\advance\dimen1 by 0.05pt\repeat}}
\def\medint{\displaystyle\copy\tr@tto\kern-10.4pt\int}
\catcode`@=12

\newtheorem{theorem}{Theorem}
\newtheorem{proposition}[theorem]{Proposition}
\newtheorem{lemma}[theorem]{Lemma}

\newtheorem{remark}[theorem]{Remark}
\newtheorem{definition}{Definition}

\newcommand{\BB}{\mathrm{B}}
\newcommand{\LL}{\mathrm{L}}
\newcommand{\WW}{\mathrm{W}}

\newcommand{\CC}{\mathrm{C}}

\newcommand{\dd}{{\rm d}}

\numberwithin{equation}{section}

\title{Regularity of minimizers of autonomous convex variational integrals}
\author{Menita Carozza, Jan Kristensen and Antonia Passarelli di Napoli\\
Accepted for publication in Ann.~Scuola Norm.~Sup.~Pisa Cl.~Sci.\\
DOI Number: 10.2422/2036-2145.201208$\mbox{}_{-}$005}

\date{September 2012}

\begin{document}

\maketitle

\begin{abstract}
We establish local higher integrability and differentiability results for minimizers of
variational integrals
$$
\mathfrak{F}( v, \Omega )= \int_{\Omega} \! F(Dv(x)) \, \dx
$$
over $\WW^{1,p}$--Sobolev mappings $v\colon \Omega \subset \Rn \to \RN$ satisfying a
Dirichlet boundary condition. The integrands $F$ are assumed to be autonomous, convex and of
$(p,q)$ growth, but are otherwise not subjected to any further structure conditions, and
we consider exponents in the range $1<p \leq q <p^{\ast}$, where $p^\ast$ denotes the Sobolev
conjugate exponent of $p$.
\bigskip

\noindent
{\footnotesize {\bf AMS Classifications.}  49N15; 49N60; 49N99.}
\bigskip

\noindent
{\footnotesize {\bf Keywords.} Degenerate Problems, $(p,q)$ Growth, 
Higher Integrability, Higher Differentiability}

\end{abstract}

\maketitle

\section{Introduction  and Statement of Results }

We prove local higher integrability and differentiability results for minimizers of the
basic autonomous and convex variational integrals
\begin{equation}\label{defint}
\mathfrak{F}( v, O )= \int_{O} \! F(Dv(x)) \, \dx
\end{equation}
with integrands $F$ satisfying $(p,q)$ growth conditions, defined for mappings
$v\colon \Omega \to \RN$ of Sobolev class $\WW^{1,p}$ and open subsets $O$
of a fixed bounded and open subset $\Omega$ of $\Rn$. Our results mainly concern the
multi--dimensional vectorial case $n$, $N \geq 2$, but some aspects are also new 
in the multi--dimensional scalar case, $n \geq 2$ and $N=1$. The one--dimensional case, $n=1$
and $N \geq 1$, is special and stronger results apply there.

In order to state and describe the results we consider an integrand $F\colon \M \to \real$ satisfying 
the following growth and convexity hypotheses:
$$
F(\xi)\le L( |\xi |^{q}+1)  \eqno{\rm (H1)}
$$
$$
\xi \mapsto F(\xi ) - \ell \Bigl( \mu^{2}+| \xi |^{2}\Bigr)^{\frac{p}{2}} \quad \mbox{ is convex } \eqno{\rm (H2)}
$$
for all $\xi \in \M$. Here $L$, $\ell >0$ and $\mu \geq 0$ are arbitrary but fixed constants, whereas the 
exponents $q \ge p > 1$ will be subjected to various constraints.

The hypothesis (H2) is a uniform strong $p$--convexity condition for the
integrand $F$, and is similar to the condition considered in \cite{ff}. In fact, when $F$ is $\CC^2$ then
(H2) is equivalent to the following standard strong $p$--ellipticity condition
$$
F^{\prime \prime}(\xi)[\eta,\eta ]\geq c \Bigl( \mu^{2}+| \xi |^{2} \Bigr)^{\frac{p-2}{2}}|\eta |^{2} \eqno{\rm (H2')}
$$
for all $\xi$, $\eta \in \M$, where $c$ is a positive constant of form $c=c(p)\ell$. While when $F$ is $\CC^1$ the
hypothesis (H2) is equivalent to the following standard strong $p$--monotonicity condition
$$
\bigl\langle F^{\prime}(\xi )-F^{\prime}(\eta ), \xi - \eta \bigr\rangle \geq
c \bigl( \mu^{2}+|\xi |^{2}+|\eta |^{2} \bigr)^{\frac{p-2}{2}}| \xi-\eta |^{2} \eqno{\rm (H2'')}
$$
for all $\xi$, $\eta \in \M$, where again $c=\tilde{c}(p)\ell$.
It is easy to see that hypothesis (H2) in particular implies that the integrand $F$
is bounded from below, and we shall therefore often implicitly assume that $F$ is nonnegative once (H2) holds.

It is well--known that for convex integrands, the growth condition (H1) implies a Lipschitz
condition, which for $\CC^1$ integrands can be stated as
\begin{equation}\label{(H4)}
|F^{\prime}(\xi)|\le c(|\xi |^{q-1}+1 )
\end{equation}
for all $\xi \in \M$, where we can use $c=2^{q}L$.

We will be concerned with two closely related classes of $F$--minimizers of the variational
integral (\ref{defint}), under the assumptions (H1), (H2), which are defined as follows.

\begin{definition}

\noindent
(i) A mapping $u \in \WW^{1,p}(\Omega ,\RN )$ is an $F$--minimizer if $F(Du) \in \LL^{1}(\Omega )$
and
$$
\int_{\Omega} \! F(Du) \leq \int_{\Omega} \! F(Dv)
$$
for all $v \in \WW^{1,p}_{u}(\Omega , \RN )$.

\noindent
(ii) A mapping $u\in \WW^{1,1}_{\rm loc}(\Omega, \RN )$ is a local $F$--minimizer if
$F(Du) \in \LL^{1}_{\rm loc}(\Omega )$ and
$$
\int_{O} \! F(Du) \leq \int_{O} \! F(Dv)
$$
for any $O \Subset \Omega$ and any $v\in \WW_{u}^{1,p}(O, \RN )$.
\end{definition}
\noindent
Here we use the notation $O \Subset \Omega$ as a short--hand for: $O$ is an open set whose closure, $\overline{O}$,
is compact and contained in $\Omega$. Furthermore for an open subset $O \subset \Omega$ (no regularity of $\partial O$ implied) 
and $u \in \WW^{1,p}(O, \RN )$ we denote by $\WW^{1,p}_{u}(O,\RN )$ the Dirichlet class of $\WW^{1,p}$ Sobolev maps $v$ such that the difference
$v-u \in \WW^{1,p}_{0}(O,\RN )$, where the latter is defined to be the closure of the space of smooth and compactly supported
test maps, $\CC^{\infty}_{c}(O, \RN )$, in $\WW^{1,p}(O, \RN )$. We refer to the monograph \cite{zi} for background theory
on the relevant function spaces.

\noindent
We emphasize that in the definitions of $F$--minimality it is crucial for regularity theory that we include the integrability
requirements on $F(Du)$.

The assumptions (H1), (H2) clearly entail a $(p,q)$ growth condition:
there exists a constant $c=c(L/\ell ,p,q,\mu )>0$ such that
\begin{equation}\label{p,q}
\frac{1}{c}|\xi |^{p} - c \leq F(\xi ) \leq c(|\xi |^{q} + 1)
\end{equation}
for all $\xi \in \M$.

A systematic study of the regularity of minimizers of
such functionals was initiated with the celebrated papers by Marcellini (see in particular \cite{m2,m3,m4}). From the beginning it 
has been clear that no regularity can be expected if the coercitivity and growth exponents, denoted $p$ and $q$,
respectively, are too far apart (see \cite{m1,g2,hong} and also \cite{elm2,fmm}).
On the other hand,  many regularity results are available if the ratio $q/p$ is bounded above by a
suitable constant that in general depends on the dimension $n$, and converges to $1$ when $n$ tends to infinity
(incl.~\cite{af1,b,CKPdN,elm0,elm1,elm2,mp,ps}).

In the present context of $p$--convex integrands of $q$--growth the higher differentiability of minimizers is obtained by a 
variant of the difference--quotient method in connection with some sort of regularization procedure (see in particular 
\cite{elm0,elm1,elm2} and \cite{CKPdN}). In particular we emphasize that all such results rely heavily on the strong convexity 
hypothesis imposed on the integrand, and that this allows for a treatment based on (a suitable version of) the Euler--Lagrange system. 
However it should be noted that for a direct derivation of the Euler--Lagrange system we already have to require that the exponents $p$ and $q$
are sufficiently close.  Indeed, it is
well--known that for a convex $\CC^1$ integrand $F$ satisfying (\ref{p,q}) with exponents $q \leq p+1$
the $F$--minimality of a $\WW^{1,p}$--map $u$ implies that it is an $F$--extremal too: the field
$F^{\prime}(Du)$ is locally integrable (in fact, locally $p/(q-1)$--integrable by (\ref{(H4)})) and is row--wise solenoidal
in the sense that
$$
\int_{\Omega} \! \langle F^{\prime}(Du) , D\varphi \rangle = 0
$$
for all $\varphi \in \CC^{\infty}_{c}(\Omega , \RN )$. Using a regularization procedure and convex duality theory we
shall  establish much stronger results in Proposition \ref{dual} below.


The main results of this paper are the local higher integrability result stated in Theorem \ref{th1} and the local higher
differentiability result stated in Theorem \ref{th2}. Both results concern minimizers of the integral functional \eqref{defint} under the
assumptions (H1) and (H2). We obtain local higher integrability for minimizers when the exponents $p$ and $q$ satisfy the condition $1<p \leq q <p^{\ast}$, 
where $p^{\ast}$ denotes the Sobolev conjugate exponent of $p$. The interpretation of this is 
\begin{equation}\label{interpret}
\begin{split}
1<p \leq q < \frac{np}{n-p} \mbox{ when } p<n\\
1<p \leq q < \infty \mbox{ when } p \geq n .
\end{split}
\end{equation}
We emphasize that as opposed to many of the above mentioned results, we do not require any additional 
structure assumption on the integrand. More precisely we have the following:

\begin{theorem}\label{th1}
Let $F \colon \M \to \R$ be $\CC^1$ and satisfy (H1), (H2) with $\mu =0$. Assume
\begin{equation}\label{qcond}
1<p \leq q < p^{\ast},
\end{equation}
where $p^{\ast}$ denotes the Sobolev conjugate exponent of $p$ (with the interpretation of (\ref{interpret})).
For $g \in \WW^{1,q}(\Omega ,\RN )$ let $u \in \WW^{1,p}_{g}(\Omega , \RN )$ be the
unique $F$--minimizer. Then $u \in \WW^{1,q}_{\rm loc}(\Omega , \RN )$ when $q<\frac{np}{n-1}$,
and $u \in \WW^{1,r}_{\rm loc}(\Omega , \RN )$ for all $r<\bar{p}$, where
\begin{equation}\label{myst}
\bar{p} := \frac{np}{n-\frac{p}{p-1}\Bigl( 1-n(\frac{1}{p}-\frac{1}{q})\Bigr)} \quad \mbox{ when }
\quad q \geq \frac{np}{n-1}.
\end{equation}
\end{theorem}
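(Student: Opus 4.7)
The proof proceeds by regularization combined with a uniform local fractional Sobolev estimate on $V_p(Du_\varepsilon) := (\mu^2 + |Du_\varepsilon|^2)^{(p-2)/4}Du_\varepsilon$. For $\varepsilon > 0$, introduce the regularized integrand $F_\varepsilon(\xi) := F(\xi) + \varepsilon(1+|\xi|^2)^{q/2}$, which preserves the strong $p$-convexity (H2) while adding genuine $q$-coercivity, so that $F_\varepsilon$ satisfies the full $q$-growth/$q$-coercivity hypotheses of classical theory. The corresponding Dirichlet problem admits a unique minimizer $u_\varepsilon \in \WW^{1,q}_g(\Omega,\RN)$, which is in particular a classical $F_\varepsilon$-extremal of $\CC^2$ regularity under non-degeneracy. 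Testing minimality against $g \in \WW^{1,q}$ yields the uniform bounds $\|u_\varepsilon\|_{\WW^{1,p}(\Omega)} \leq C$ and $\varepsilon\int_\Omega(1+|Du_\varepsilon|^2)^{q/2}\,\dx \leq C$; by weak compactness, lower semicontinuity of $\int F$, and uniqueness of the $F$-minimizer, $u_\varepsilon \weak u$ in $\WW^{1,p}$.

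The core step is to establish, uniformly in $\varepsilon$, a local fractional regularity $V_p(Du_\varepsilon) \in \WW^{s,2}_{\rm loc}$ for a suitable $s = s(p,q,n) \in (0,1]$. To this end, one differentiates the Euler--Lagrange system of $u_\varepsilon$ and tests with $\tau_{-h}(\eta^2 \tau_h u_\varepsilon)$, where $\eta$ is a standard cutoff between $B_r$ and $B_R$. The strong $p$-monotonicity (H2$''$) produces the good left-hand side $\int \eta^2 |\tau_h V_p(Du_\varepsilon)|^2\,\dx$, while the right-hand side contains a benign cutoff term plus a more delicate term in which one pairs $|\tau_h F'_\varepsilon(Du_\varepsilon)|$ with $|\tau_h Du_\varepsilon|$. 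The latter is bounded via (H1) and Young's inequality, with exponents chosen so as to re-absorb the $\varepsilon(1+|\xi|^2)^{q/2}$ contribution into the $p$-coercive part without $\varepsilon^{-1}$ blow-up. This balancing pins down $s$: it saturates at $s=1$ when $q \leq np/(n-1)$ and is strictly less than $1$ otherwise, tending to $0$ as $q \uparrow p^\ast$.

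Translating the fractional bound via the embedding $\WW^{s,2}_{\rm loc} \hookrightarrow \LL^{2n/(n-2s)}_{\rm loc}$ and the pointwise identity $|V_p(\xi)|^2 \sim (\mu^2+|\xi|^2)^{p/2}$ yields a uniform local $\LL^r$ bound on $Du_\varepsilon$ with $r = np/(n-2s)$. In the subrange $q < np/(n-1)$, $s=1$ gives $u_\varepsilon \in \WW^{2,p}_{\rm loc}$ and consequently $r = p^\ast > q$, proving $u \in \WW^{1,q}_{\rm loc}$. In the subrange $np/(n-1) \leq q < p^\ast$, the value of $s$ coming from the Young balancing is exactly such that $r = \bar{p}$ as in (\ref{myst}), yielding $u \in \WW^{1,r}_{\rm loc}$ for every $r < \bar{p}$ after passage to the limit $\varepsilon \to 0$ via weak lower semicontinuity and uniqueness of the $F$-minimizer.

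The principal obstacle is the uniform-in-$\varepsilon$ Nikolskii estimate: the only coercive term available for absorbing the regularization's $\varepsilon$-contribution to the Euler--Lagrange difference is the $p$-monotone part of (H2$''$), and the matching of $(p,q)$ exponents required by Young's inequality is exactly what determines the fractional index $s$ and, through the Sobolev embedding above, the formula for $\bar{p}$. The constraint $q < p^\ast$ is precisely what keeps $s>0$ and hence yields a genuine gain over the $p$-coercivity one starts with.
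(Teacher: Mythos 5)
There is a genuine gap, and it is precisely at the step you describe as the "core" one. You estimate the delicate term of the differentiated Euler--Lagrange system (the pairing of $\Delta_h F_\varepsilon'(Du_\varepsilon)$ with $\Delta_h u_\varepsilon$) "via (H1) and Young's inequality", i.e.\ through the growth bound $|F'(\xi)|\le c(|\xi|^{q-1}+1)$. But the only uniform-in-$\varepsilon$ information you have at that stage is $Du_\varepsilon$ bounded in $\LL^{p}$ (the $\varepsilon$-weighted $\LL^q$ bound degenerates), so this route only gives $F'(Du_\varepsilon)$ uniformly in $\LL^{p/(q-1)}$, and the H\"older/Nikolskii pairing then needs $\Delta_h u_\varepsilon$ in $\LL^{p/(p-q+1)}$; the embedding $\WW^{1,p}\hookrightarrow \BB^{\alpha,p/(p-q+1)}_{\infty}$ has positive exponent only for $q<\tfrac{n+1}{n}p$. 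That is exactly the classical restriction of \cite{elm1,elm2,CKPdN}, and no choice of Young exponents can push it to $q<p^{\ast}$, nor produce the formula (\ref{myst}) for $\bar p$ (whose factor $\tfrac{p}{p-1}$ does not arise from a one-shot balancing). The paper's essential new ingredient, which your proposal is missing, is Proposition \ref{dual}: by convex duality the stress $\sigma^{\ast}=F'(Du)$ (and the approximating stresses $\sigma_k=F_k'(Du_k)$) is row-wise solenoidal and maximizes $\sigma\mapsto\int_\Omega(\langle Dg,\sigma\rangle-F^{\ast}(\sigma))$, and since $g\in\WW^{1,q}$ and $F_k^{\ast}$ has uniform $(q',p')$ growth this yields a \emph{uniform} $\LL^{q'}$ bound on the stresses. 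It is this $\LL^{q'}$ bound, inserted in place of (\ref{(H4)}), that makes the difference-quotient estimate (\ref{step}) close for all $q<p^{\ast}$. Moreover, even with that bound, a single step only gives $Du$ in $\LL^r_{\rm loc}$ for $r<\tfrac{np}{n-\alpha}$ with $\alpha=1-n(\tfrac1p-\tfrac1q)$, which for $p$ near $1$ and $q$ near $\tfrac{np}{n-1}$ is far below $q$; the conclusions of Theorem \ref{th1} come from iterating the argument, with $\bar p$ appearing as the limit of the sequence $p_j=\tfrac{np}{\,n-1+n(\frac{1}{p_{j-1}}-\frac1q)}$. Your single-step numerology is also internally inconsistent: $V(Du_\varepsilon)\in\WW^{1,2}_{\rm loc}$ does not give $u_\varepsilon\in\WW^{2,p}_{\rm loc}$ for $p\neq2$, and $s=1$ in your own embedding gives $r=\tfrac{np}{n-2}$, not $p^{\ast}$.

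A secondary but real issue is your regularization $F_\varepsilon=F+\varepsilon(1+|\xi|^2)^{q/2}$ and the claim that $u_\varepsilon\weak u$ "by lower semicontinuity and uniqueness". The limit of the $u_\varepsilon$ is, by that argument, only known to minimize $\int F$ among $\WW^{1,q}_{g}$ competitors; identifying it with the $\WW^{1,p}_{g}$-minimizer $u$ requires the absence of a Lavrentiev-type gap between the two classes, which is nontrivial in the $(p,q)$ setting and essentially part of what is being proved. The paper avoids this by approximating the integrand from below by $p$-growth integrands $F_k\nearrow F$ (the Fenchel-truncation construction in the proof of Proposition \ref{dual}), so that all approximating problems live in the same class $\WW^{1,p}_{g}$, monotone convergence identifies the limit, and one even gets strong $\WW^{1,p}$ convergence together with the dual estimates. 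To repair your proposal you would need both this (or an equivalent) approximation scheme and, above all, the duality estimate on the stress; without them the argument cannot reach the stated range of exponents.
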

The proof of Theorem \ref{th1} is based on the difference--quotient method but, in contrast to
the above mentioned papers, under our assumptions on the exponents $p$ and $q$, we can not use
directly that an $F$--minimizer is a solution to the corresponding Euler--Lagrange system. Instead we 
approximate the integrand $F$ by suitably regular integrands in order to facilitate a systematic use of 
the dual problems in the sense of Convex Analysis. Namely we approximate $F$ by strictly convex and 
uniformly elliptic integrands $F_k$, satisfying standard $p$--growth conditions, whose minimizers $u_k$ strongly
converge to the minimizer $u$ in $\WW^{1,p}$. To every such minimizer $u_k$ we can associate, essentially according 
to the standard duality theory for convex problems \cite{et}, a row--wise solenoidal matrix field denoted by $\sigma_k$. 
For the pair $(Du_k, \sigma_k)$ we shall establish suitable estimates, that are preserved in passing to the limit.
Such estimates will provide conditions in order for the Euler--Lagrange system to hold for an $F$--minimizer,
as well as a first regularity result (see Proposition \ref{dual} in Section 3).
While the dual problems have been used previously in regularity theory, see for instance \cite{cmp}, \cite{cp},
it seems that the observations used here have so far not been employed in the anisotropic
growth context. We refer the reader to Remark \ref{remarque} for a discussion of the somewhat mysterious exponent $\bar{p}$
that appears in Theorem \ref{th1}.

\noindent
As a consequence of the higher integrability result of Theorem \ref{th1}, we are to able to
establish the following

\begin{theorem}\label{th2}
Assume $F \colon \M \to \R$ is $\CC^1$ and satisfies (H1), (H2) for some exponents
$1<p \leq q <\infty$. Let $u \in \WW^{1,p}_{\mathrm{loc}}(\Omega , \RN )$ be a local $F$--minimizer. Setting
\begin{equation*}
V( Du )  := \Bigl( \mu^{2}+| Du |^{2} \Bigr)^{\frac{p-2}{4}}Du ,
\end{equation*}
we have that $$V(Du) \in \WW^{1,2}_{\rm loc}(\Omega , \M ),$$
provided $q < \tfrac{np}{n-1}$.
\end{theorem}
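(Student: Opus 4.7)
Our strategy is the classical ``higher integrability implies higher differentiability'' one. Since (H2) with $p>1$ makes the functional strictly convex, on every ball $B_R \Subset \Omega$ the local minimizer $u$ coincides with the unique Dirichlet $F$-minimizer of its own boundary values. The hypothesis $q < np/(n-1) < p^\ast$ is precisely the regime in which Theorem~\ref{th1} (applied after a $\WW^{1,q}$-approximation of the trace of $u|_{B_{R'}}$ for $B_R \Subset B_{R'}\Subset \Omega$) yields $u \in \WW^{1,q}_{\rm loc}(\Omega, \RN)$; a small adaptation handles the case $\mu>0$ not directly covered by Theorem~\ref{th1}. By \eqref{(H4)} we then have $F^\prime(Du)\in \LL^{q/(q-1)}_{\rm loc}$, and a routine comparison argument yields the Euler--Lagrange system
\begin{equation*}
\int_\Omega \la F^\prime(Du), D\varphi\ra\,\dx = 0 \qquad \text{for all } \varphi\in\CC^\infty_c(\Omega,\RN).
\end{equation*}

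Since $F$ is only $\CC^1$, we pass to a regularized problem. Set $F_\eps(\xi):=(F\ast\rho_\eps)(\xi)+\eps(\mu^2+|\xi|^2)^{q/2}$: this integrand is smooth, inherits (H1), (H2) with the same $p$ and $\ell$, and is $\eps$-uniformly of $q$-growth and elliptic. Let $u_\eps$ be the $F_\eps$-minimizer on $B_R$ with boundary value $u$. Standard elliptic theory yields $u_\eps\in\WW^{2,2}_{\rm loc}$, strict convexity gives $u_\eps\to u$ in $\WW^{1,p}$, and Theorem~\ref{th1} applied to $F_\eps$ furnishes an $\eps$-uniform $\WW^{1,q}_{\rm loc}$ bound. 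For a cutoff $\eta\in\CC^\infty_c(B_R)$ and $h$ small, testing the EL system of $F_\eps$ with $\varphi=\tau_{-h,s}(\eta^2\tau_{h,s}u_\eps)$ and invoking (H2'') for $F_\eps$ (with constants uniform in $\eps$), after absorbing cross terms by Young's inequality one obtains
\begin{equation*}
\int \eta^2|\tau_{h,s}V(Du_\eps)|^2\,\dx \,\le\, C\int \bigl(\mu^2+|Du_\eps|^2+|Du_\eps(\cdot+he_s)|^2\bigr)^{\frac{q-2}{2}}|\tau_{h,s}u_\eps|^2|D\eta|^2\,\dx.
\end{equation*}

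The main obstacle, and the point at which the assumption $q<np/(n-1)$ becomes crucial, is closing this estimate uniformly in $h$ and $\eps$. Using the pointwise bound $|\tau_{h,s}u_\eps|/|h|\le \Mi(|Du_\eps|)$ for the Hardy--Littlewood maximal operator, Hölder's inequality with exponents $(q/(q-2),\,q/2)$, and the $\LL^q$-boundedness of $\Mi$ (valid since $q>1$), the right-hand side is dominated by a multiple of $\|Du_\eps\|_{\LL^q(B_R)}^q$; this is controlled precisely by the $\WW^{1,q}_{\rm loc}$-regularity supplied by Theorem~\ref{th1} in the range $q<np/(n-1)$, and the bound is uniform in $\eps$ by the uniform version of Theorem~\ref{th1} applied to the family $\{F_\eps\}$. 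Sending $h\to 0$ produces a uniform $\WW^{1,2}_{\rm loc}$-bound on $V(Du_\eps)$; weak compactness combined with $\LL^p$-strong convergence $Du_\eps\to Du$ (hence a.e.~convergence along a subsequence) identifies the weak limit as $V(Du)$, giving $V(Du)\in\WW^{1,2}_{\rm loc}(\Omega,\M)$.
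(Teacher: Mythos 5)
The crux of your argument is the displayed Caccioppoli--type estimate, and that is exactly where the proposal breaks down. Testing the Euler--Lagrange system of $F_\eps$ with $\varphi=\Delta_{s,-h}(\eta^2\Delta_{s,h}\ue)$ and using (H2$''$) on the left, the cross term to be handled is $2\int\eta\,\langle\Delta_{s,h}F_\eps^{\prime}(D\ue),\Delta_{s,h}\ue\otimes D\eta\rangle$. The only upper bounds on $F_\eps^{\prime}$, resp.\ $F_\eps^{\prime\prime}$, available from (H1) are of order $q-1$, resp.\ $q-2$ (and for a mollification of a merely $\CC^1$ integrand not even the latter is uniform in $\eps$). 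Hence, when you absorb the factor $|\Delta_{s,h}D\ue|$ into the left--hand side, which carries the weight $(\mu^2+|D\ue|^2+|D\ue(\cdot+he_s)|^2)^{\frac{p-2}{2}}$, Young's inequality leaves the weight $(\mu^2+|D\ue|^2+|D\ue(\cdot+he_s)|^2)^{\frac{2q-p-2}{2}}$ on the right, not $(\,\cdot\,)^{\frac{q-2}{2}}$ as you claim; your exponent would require $\|F^{\prime\prime}\|\lesssim\langle\xi\rangle^{\frac{p+q-4}{2}}$, which is not implied by (H1)--(H2) and coincides with the true exponent only when $q=p$. With the correct weight, your maximal--function/H\"older step requires a bound on $D\ue$ in $\LL^{2q-p}$ uniformly in $\eps$, and $2q-p>q$ whenever $q>p$; Theorem \ref{th1} supplies only $\WW^{1,q}_{\rm loc}$ in the range $q<\frac{np}{n-1}$, so the estimate does not close. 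This is not a technicality: controlling $\int\langle D\ue\rangle^{2q-p}$ with only $\LL^q$--information on the gradient is the main difficulty of Theorem \ref{th2}, and your argument uses the hypothesis $q<\frac{np}{n-1}$ only to invoke Theorem \ref{th1}, which is not where its full strength is needed.

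The paper's proof faces the same right--hand side (see \eqref{hek9}, whose right--hand side indeed carries $\wei^{2q-p}$) and resolves it by a mechanism absent from your proposal: apply the Sobolev embedding to $\rho^kV(D\ue)$, interpolate the $\LL^{\frac{2(2q-p)}{p}}$--norm of $V(D\ue)$ between $\LL^{\frac{2n}{n-2}}$ and $\LL^{\frac{2q}{p}}$, and note that the resulting exponent $\theta\frac{2q-p}{p}=\frac{(q-p)n}{pn-q(n-2)}$ is strictly less than $1$ precisely because $q<\frac{pn}{n-1}$; Young's inequality and Widman's hole--filling over radii $R\le r<s\le 2R$ then absorb the supercritical term, leaving only quantities controlled by the higher integrability of Theorem \ref{th1}. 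If you wish to keep a difference--quotient scheme you must build in this interpolation/hole--filling step (or an equivalent device); as written the central estimate is unjustified. Your other deviations --- regularizing by $F\ast\rho_\eps+\eps\langle\xi\rangle^{q}$ instead of the paper's higher--order penalty $\tfrac{\teps}{2}|D^kv|^2$, and the uniform--in--$\eps$ application of Theorem \ref{th1} to the family $F_\eps$ --- are plausible but would also need proof ($\WW^{2,2}_{\rm loc}$--regularity of the regularized minimizers and uniformity of all constants), whereas the paper's $k$--th order perturbation gives $\WW^{2k,2}_{\rm loc}\hookrightarrow\CC^2$ minimizers and lets one differentiate the Euler--Lagrange system directly.
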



\noindent
We remark that under the assumptions (H1), (H2) on the integrand $F$ the $\WW^{1,2}_{\rm loc}$ higher differentiability of
$V(Du)$ has previously been established only when the exponents $p$, $q$ satisfy the stronger conditions $1< p \leq q < \tfrac{n+1}{n}p$,
see \cite{elm1}, and also \cite{ff,elm0,elm2,b,CKPdN}. Our improved bound $q < \tfrac{np}{n-1}$ is
in a certain sense more natural as it also appears in connection with optimal trace and embedding results
for Sobolev--type spaces. Namely when $B \subset \Rn$ is an open ball (or any smooth bounded domain), then any 
$h \in \WW^{1,p}(\partial B)$ can be extended to $H \in \WW^{1,q}(B)$ precisely 
for $q \leq \tfrac{np}{n-1}$. See for instance \cite{tar} for such trace and embedding theorems.

\noindent
Finally, we remark that our results can be generalized to minimizers of the general autonomous
convex variational integral
$$
\int_{\Omega} \! F(v,Dv) \, \dx .
$$
The precise statements and proofs are left to the interested reader, and we only note that
it is essential that the integrand $F=F(y,z)$ be jointly convex for the validity of the
results. Similar remarks, together with precise statements and sketches of proofs, were given
in \cite{CKPdN}.

\section{Preliminaries}\label{sectpre}

In this paper we follow the usual convention and denote by $c$ a
general constant that may vary on different occasions, even within the
same line of estimates.
Relevant dependencies on parameters and special constants will be suitably emphasized using
parentheses or subscripts. All the norms we use
on $\Rn$, $\RN$ and $\M$ will be the standard euclidean ones and denoted
by $| \cdot |$ in all cases. In particular, for matrices $\xi$, $\eta \in \M$ we write
$\langle \xi , \eta \rangle := \mbox{trace}(\xi^{T}\eta )$ for the usual inner product
of $\xi$ and $\eta$, and $| \xi | := \langle \xi , \xi \rangle^{\frac{1}{2}}$ for the
corresponding euclidean norm.
When $a \in \RN$ and $b \in \Rn$ we write $a \otimes b \in \M$ for the tensor product defined
as the matrix that has the element $a_{r}b_{s}$ in its r-th row and s-th column. Observe that
$(a \otimes b)x = (b \cdot x)a$ for $x \in \Rn$, and $| a \otimes b | = |a| |b|$.

When $F \colon \M \to \R$ is sufficiently differentiable, we write
$$
F^{\prime}(\xi )[\eta ] := \frac{\rm d}{{\rm d}t}\Big|_{t=0} F(\xi +t\eta )
\quad \mbox{ and } \quad
F^{\prime \prime}(\xi )[\eta ,\eta ] := \frac{\rm d^2}{{\rm d}t^{2}}\Big|_{t=0} F(\xi +t\eta )
$$
for $\xi$, $\eta \in \M$. Hereby we think of $F^{\prime}(\xi )$ both as an $N \times n$ matrix
and as the corresponding linear form on $\M$, though $|F^{\prime}(\xi )|$ will always
denote the euclidean norm of the matrix $F^{\prime}(\xi )$. The second derivative,
$F^{\prime \prime}(\xi )$, is a real bilinear form on $\M$. We express growth conditions for
the second derivative of the integrand in terms of the operator norm on bilinear forms:
$$
\| F^{\prime \prime}(\xi ) \| := \sup_{| \eta | \leq 1, |\zeta | \leq 1}
F^{\prime \prime}(\xi )[\eta , \zeta ] .
$$
It is convenient, and by now common, to express the convexity and growth conditions for the
integrands in terms of two auxiliary functions defined for all $\xi \in \M$ as
\begin{equation}\label{aux1}
\langle \xi \rangle = \langle \xi \rangle_{\mu} := \Bigl( \mu^{2}+| \xi |^{2} \Bigr)^{\frac{1}{2}}
\end{equation}
and
\begin{equation}\label{aux2}
V( \xi ) = V_{p,\mu}(\xi ) := \Bigl( \mu^{2}+| \xi |^{2} \Bigr)^{\frac{p-2}{4}}\xi ,
\end{equation}
where $\mu \geq 0$ and $p \geq 1$ are parameters.
For the auxiliary function $V_{p,\mu}$, we record the following
estimate (see the proof of \cite[Lemma 8.3]{gi}):

\begin{lemma} \label{V}
Let $1<p<\infty$ and $0\leq \mu \leq 1$. There exists a constant $c=c(n,N,p)>0$
such that
$$
c^{-1}\Bigl( \mu^{2}+| \xi |^2+| \eta |^2 \Bigr)^{\frac{p-2}{2}}\leq
\frac{|V_{p,\mu}(\xi )-V_{p,\mu}(\eta )|^2}{|\xi -\eta |^2} \leq
c\Bigl( \mu^{2}+|\xi |^2+|\eta |^2 \Bigr)^{\frac{p-2}{2}}
$$
for any $\xi$, $\eta \in \M$.
\end{lemma}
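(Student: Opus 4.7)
The plan is to reduce the claim to a pointwise estimate on the derivative of $V_{p,\mu}$ combined with a fundamental--theorem--of--calculus argument along the segment joining $\xi$ and $\eta$.

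First I would note that since $p>1$, the map $\zeta \mapsto V_{p,\mu}(\zeta)$ is $\CC^1$ on $\M$, and a direct calculation gives
\[
V_{p,\mu}^{\prime}(\zeta)[h]=(\mu^{2}+|\zeta|^{2})^{\frac{p-2}{4}}h+\tfrac{p-2}{2}(\mu^{2}+|\zeta|^{2})^{\frac{p-6}{4}}\langle\zeta,h\rangle\,\zeta.
\]
Decomposing $h$ into its components parallel and orthogonal to $\zeta$, one sees that the symmetric bilinear form $V_{p,\mu}^{\prime}(\zeta)$ has eigenvalues equal to $(\mu^{2}+|\zeta|^{2})^{\frac{p-2}{4}}$ on the orthogonal complement of $\zeta$ and to $(\mu^{2}+|\zeta|^{2})^{\frac{p-6}{4}}\bigl[\tfrac{p}{2}|\zeta|^{2}+\mu^{2}\bigr]$ on the line spanned by $\zeta$. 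Since $p>1$, the factor $\tfrac{p}{2}|\zeta|^{2}+\mu^{2}$ is comparable to $\mu^{2}+|\zeta|^{2}$ with constants depending only on $p$. Consequently,
\[
c^{-1}(\mu^{2}+|\zeta|^{2})^{\frac{p-2}{4}}|h|\le |V_{p,\mu}^{\prime}(\zeta)[h]|\le c\,(\mu^{2}+|\zeta|^{2})^{\frac{p-2}{4}}|h|,
\]
and also $\langle V_{p,\mu}^{\prime}(\zeta)[h],h\rangle\ge c^{-1}(\mu^{2}+|\zeta|^{2})^{\frac{p-2}{4}}|h|^{2}$, for some $c=c(p)>0$.

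Next I would apply the chain rule along the segment $\zeta_{t}:=\eta+t(\xi-\eta)$ to write
\[
V_{p,\mu}(\xi)-V_{p,\mu}(\eta)=\int_{0}^{1}V_{p,\mu}^{\prime}(\zeta_{t})[\xi-\eta]\,\dt.
\]
The upper bound of the lemma follows from the triangle inequality and the pointwise estimate on $V_{p,\mu}^{\prime}$: $|V(\xi)-V(\eta)|\le c\,|\xi-\eta|\int_{0}^{1}(\mu^{2}+|\zeta_{t}|^{2})^{\frac{p-2}{4}}\,\dt$. For the lower bound I would first use Cauchy--Schwarz,
\[
|V(\xi)-V(\eta)|\,|\xi-\eta|\ge \langle V(\xi)-V(\eta),\xi-\eta\rangle=\int_{0}^{1}\langle V_{p,\mu}^{\prime}(\zeta_{t})[\xi-\eta],\xi-\eta\rangle\,\dt,
\]
and then apply the quadratic--form estimate to obtain $|V(\xi)-V(\eta)|\ge c\,|\xi-\eta|\int_{0}^{1}(\mu^{2}+|\zeta_{t}|^{2})^{\frac{p-2}{4}}\,\dt$. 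Squaring both inequalities reduces the proof to a purely scalar comparison of the type
\[
\int_{0}^{1}(\mu^{2}+|\zeta_{t}|^{2})^{s}\,\dt\approx (\mu^{2}+|\xi|^{2}+|\eta|^{2})^{s}, \qquad s:=\tfrac{p-2}{4}>-\tfrac{1}{2},
\]
with constants depending only on $s$.

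The hardest part of the argument is precisely this scalar integral estimate. When $s\ge 0$ the upper bound is immediate from $|\zeta_{t}|\le |\xi|+|\eta|$, and when $s<0$ the lower bound comes by restricting the integration to a subinterval where $|\zeta_{t}|$ is comparable to $\max(|\xi|,|\eta|)$. The reverse directions are the delicate ones: I would handle them by distinguishing whether $|\xi-\eta|\le 4(\mu+\max(|\xi|,|\eta|))$---so that $|\zeta_{t}|^{2}+\mu^{2}$ is essentially comparable to $|\xi|^{2}+|\eta|^{2}+\mu^{2}$ uniformly in $t$---or the opposite, in which case one of $|\xi|,|\eta|$ dominates and a direct interval--splitting trick applies, using $s>-\tfrac{1}{2}$ to keep the integral finite. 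Once that scalar comparison is in hand, the two halves of the lemma follow simultaneously from the bounds on $V_{p,\mu}^{\prime}$ derived in the first step.
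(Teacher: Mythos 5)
Your overall architecture is sound and is in fact the standard route: two--sided eigenvalue bounds for $V_{p,\mu}^{\prime}(\zeta)$, the fundamental theorem of calculus along the segment $\zeta_{t}=\eta+t(\xi-\eta)$, and the scalar comparison $\int_{0}^{1}(\mu^{2}+|\zeta_{t}|^{2})^{s}\,dt\approx(\mu^{2}+|\xi|^{2}+|\eta|^{2})^{s}$ for $s>-\tfrac12$. The paper gives no proof of its own, citing instead Lemma 8.3 of Giusti's monograph, whose proof runs along exactly these lines. The problem is the step you yourself single out as the hardest one. The dichotomy you propose for the delicate direction of the scalar estimate, namely ``$|\xi-\eta|\le 4(\mu+\max(|\xi|,|\eta|))$ or not,'' is vacuous: by the triangle inequality $|\xi-\eta|\le 2\max(|\xi|,|\eta|)$, so the first alternative always holds and the second case never occurs. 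Worse, the conclusion you attach to the first alternative --- that $\mu^{2}+|\zeta_{t}|^{2}$ is comparable to $\mu^{2}+|\xi|^{2}+|\eta|^{2}$ uniformly in $t$ --- is false precisely in the configuration that makes the lemma delicate: take $\mu=0$ and $\eta=-\xi\neq 0$; then $|\xi-\eta|=2|\xi|\le 4\max(|\xi|,|\eta|)$, yet $\zeta_{1/2}=0$, so the integrand degenerates at $t=\tfrac12$ while the right--hand side does not. This near--antipodal, small--$\mu$ situation is exactly where the restriction $s>-\tfrac12$ is needed, so it cannot be absorbed into a ``uniformly comparable'' case; as written, the key estimate is not proved.

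The repair is the classical case split. If $|\xi-\eta|\le\tfrac12\max(|\xi|,|\eta|)$, then $|\zeta_{t}|\ge\tfrac12\max(|\xi|,|\eta|)$ for every $t\in[0,1]$ and pointwise comparability does hold. If instead $|\xi-\eta|>\tfrac12\max(|\xi|,|\eta|)$, let $t_{0}$ minimize $t\mapsto|\zeta_{t}|$ and use $|\zeta_{t}|^{2}=|\zeta_{t_{0}}|^{2}+(t-t_{0})^{2}|\xi-\eta|^{2}\ge(t-t_{0})^{2}|\xi-\eta|^{2}$, so that for $s<0$
\begin{equation*}
\int_{0}^{1}\bigl(\mu^{2}+|\zeta_{t}|^{2}\bigr)^{s}\,dt
\le\int_{0}^{1}\bigl(\mu^{2}+(t-t_{0})^{2}|\xi-\eta|^{2}\bigr)^{s}\,dt
\le C(s)\bigl(\mu^{2}+|\xi-\eta|^{2}\bigr)^{s},
\end{equation*}
where the last inequality uses $2s>-1$ (split the integration at $|t-t_{0}|=\mu/|\xi-\eta|$ when $\mu\le|\xi-\eta|$, and use the trivial bound $\mu^{2s}$ when $\mu>|\xi-\eta|$); since in this case $\mu^{2}+|\xi-\eta|^{2}\ge\tfrac18\bigl(\mu^{2}+|\xi|^{2}+|\eta|^{2}\bigr)$, the estimate closes. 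With this substitution your argument goes through. One further small point: for $\mu=0$ and $1<p<2$ the map $V_{p,0}$ is not $\CC^{1}$ at the origin, so the integral representation of $V(\xi)-V(\eta)$ needs a word --- either prove the lemma for $\mu>0$ (the constant depends only on $p$) and let $\mu\searrow 0$, or note that a segment through the origin contributes only an integrable singularity because $(p-2)/2>-1$.
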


\noindent
We shall use a class of fractional Sobolev spaces that can be defined in terms of Nikolskii
conditions. For a vector valued function
$w \colon A \to \R^k$, a natural number
$1 \leq s \leq n$ and a real number $h \in \R$, we define
the finite difference operator
$$
\Delta_{s,h}w(x) :=  w(x+he_{s})-w(x),
$$
where $\{e_{1}, \, \dots \, , \, e_{n}\}$ denotes the canonical basis  of $\Rn$.
Note that hereby $\Delta_{s,h}w(x)$ is well--defined whenever $x, x +he_{s} \in A$.

\begin{definition}
Let $A \subset \Rn$ be an open set, $k \in \N$, $\alpha \in (0,1)$ and $q \in [1,\infty )$.
For a mapping $w \in \LL^{q}_{\rm loc}(A,\R^k )$ we say that $w$ is locally in
$\BB^{\alpha ,q}_{\infty}$ on $A$ provided for each ball $B \Subset A$ there exist
$d \in (0,\mathrm{dist} (B,\partial A))$, $M > 0$ such that
$$
\int_{B} \! |\Delta_{s,h}w(x)|^q \, \dd x  \leq M |h|^{q\alpha}
$$
for every $s \in \{1,\ldots,n\}$ and $h\in \R$ satisfying
$|h|\leq d$.
\end{definition}

\begin{theorem}\label{nikol}
On any domain $\Omega \subset \Rn$ we have the continuous embeddings:
\begin{itemize}
\item[(i)] $\BB^{\alpha ,q}_{\infty} \hookrightarrow \LL^{r}_{\rm loc}$ \, for all $r< \frac{nq}{n-\alpha q}$ 
provided $\alpha \in (0,1)$, $q>1$ and $\alpha q < n$;
\item[(ii)] $\WW^{1,p}_{\rm loc} \hookrightarrow \BB^{\alpha , q}_{\infty}$ provided $\alpha = 1-n(\tfrac{1}{p}-\tfrac{1}{q})$, where
$1<p \leq q < \infty$.
\end{itemize}
\end{theorem}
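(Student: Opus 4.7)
The plan is to prove the two embeddings separately, reducing each to standard Sobolev tools combined with $\LL^{q}$ interpolation.

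For part (ii) in the main range $p<n$, I would fix concentric balls $B \Subset B' \Subset \Omega$ and take $|h|$ smaller than $\mathrm{dist}(B,\partial B')$. Writing $\Delta_{s,h}u(x) = \int_{0}^{h}\partial_{s}u(x+te_{s})\,\dt$ and applying Jensen's inequality gives the trivial estimate $\|\Delta_{s,h}u\|_{\LL^{p}(B)} \leq |h|\,\|Du\|_{\LL^{p}(B')}$, while the Sobolev inequality yields $\|\Delta_{s,h}u\|_{\LL^{p^{\ast}}(B)} \leq c\|Du\|_{\LL^{p}(B')}$. Log-convexity of the $\LL^{q}$ norms then produces
\[
\|\Delta_{s,h}u\|_{\LL^{q}(B)} \leq \|\Delta_{s,h}u\|_{\LL^{p}(B)}^{\theta}\,\|\Delta_{s,h}u\|_{\LL^{p^{\ast}}(B)}^{1-\theta}
\]
with $1/q = \theta/p + (1-\theta)/p^{\ast}$, and since $1/p^{\ast} = 1/p - 1/n$ a short computation identifies $\theta = 1 - n(1/p - 1/q) = \alpha$. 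Raising to the $q$-th power this becomes precisely $\int_{B}|\Delta_{s,h}u|^{q}\,\dx \leq C|h|^{q\alpha}$. For $p>n$ the same scheme is carried out using Morrey's Hölder embedding $\WW^{1,p}\hookrightarrow \CC^{0,1-n/p}$ in place of the Sobolev inequality, which replaces $p^{\ast}$ with $\LL^{\infty}$ and contributes a factor $|h|^{1-n/p}$; the interpolation then produces the same $\alpha$. The threshold case $p=n$ is handled by an analogous limiting procedure with $\LL^{r}$ in place of $\LL^{\infty}$.

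For part (i), the idea is to convert the Nikolskii condition into membership in a standard fractional Sobolev space of slightly lower order, and then to invoke the classical fractional Sobolev embedding. Fix $B \Subset A$ and $s \in (0,\alpha)$. For $h \in \Rn$ with $|h|$ small, decomposing $w(x+h)-w(x)$ as a chain of $n$ axis-aligned one-variable increments along the edges of a rectangular path from $x$ to $x+h$, and combining Minkowski's inequality with the Nikolskii bounds for the individual $\Delta_{j,h_{j}}w$, gives
\[
\int_{B}|w(x+h)-w(x)|^{q}\,\dx \leq CM|h|^{q\alpha}.
\]
Integrating in $h$ against the kernel $|h|^{-n-sq}$, whose combination with the factor $|h|^{q\alpha}$ is integrable near the origin precisely because $s<\alpha$, yields the Gagliardo-type estimate
\[
\int_{B}\!\int_{B}\frac{|w(x)-w(y)|^{q}}{|x-y|^{n+sq}}\,\dx\,\dy < \infty,
\]
so $w \in \WW^{s,q}_{\mathrm{loc}}(A)$. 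Since $sq<\alpha q < n$, the classical fractional Sobolev embedding $\WW^{s,q}\hookrightarrow \LL^{nq/(n-sq)}_{\mathrm{loc}}$ gives the conclusion, and letting $s \uparrow \alpha$ recovers every exponent $r < nq/(n-\alpha q)$.

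The only non-routine step is the chain-of-differences conversion in part (i) of axis-aligned Nikolskii bounds into a symmetric $|h|^{q\alpha}$ estimate for the Euclidean increment; once that bookkeeping is in place, everything else reduces to standard facts that may be cited from \cite{zi}. No structural obstacles are expected in either part.
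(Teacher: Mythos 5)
Your proposal is correct in substance, but it takes a genuinely different route from the paper: the paper does not prove Theorem~\ref{nikol} at all, it simply refers to Sections 30--32 of Tartar's notes \cite{tar} (where the embeddings are established on all of $\Rn$ by Besov--space/interpolation techniques) and observes that the local statements follow by inserting a smooth cut--off function. You instead give a self--contained argument: for (ii) you interpolate the $\LL^{q}$ norm of $\Delta_{s,h}u$ between the elementary bound $\|\Delta_{s,h}u\|_{\LL^{p}}\leq |h|\,\|Du\|_{\LL^{p}}$ and the Sobolev bound in $\LL^{p^{\ast}}$ (Morrey's $\LL^{\infty}$ bound with the extra factor $|h|^{1-n/p}$ when $p>n$), and the exponent bookkeeping does produce exactly $\alpha=1-n(\tfrac1p-\tfrac1q)$; for (i) you pass from axis--wise Nikolskii bounds to arbitrary increments by a chain of coordinate differences, deduce $w\in \WW^{s,q}_{\rm loc}$ for every $s<\alpha$ from finiteness of the Gagliardo seminorm (the region $|x-y|$ bounded away from $0$ being controlled by $w\in\LL^{q}_{\rm loc}$), and invoke the classical fractional Sobolev embedding, recovering every $r<nq/(n-\alpha q)$ as $s\uparrow\alpha$. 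What your route buys is a proof from first principles (modulo the fractional embedding, which is classical); what the paper's citation buys is brevity and coverage of all borderline configurations at once.

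Two caveats on your write--up of (ii). First, the interpolation implicitly needs $p\leq q\leq p^{\ast}$, i.e.\ $\alpha\geq 0$; this is automatic in the only range where the statement is meaningful, so it is not a gap, but it deserves a sentence. Second, and more seriously, the case $p=n$ as you sketch it does not reach the stated exponent: interpolating against $\LL^{r}$ with $r<\infty$ gives $|h|^{\theta}$ with $\theta$ strictly below $\alpha=n/q$, and since the constant in $\WW^{1,n}\hookrightarrow \LL^{r}$ blows up as $r\to\infty$, a naive limiting procedure loses at least a logarithmic factor and does not deliver $|h|^{\alpha}$ itself. The clean repair, which in fact unifies all three cases, is to apply the Gagliardo--Nirenberg/Sobolev inequality directly to the increment, $\|\Delta_{s,h}u\|_{\LL^{q}}\leq c\,\|\Delta_{s,h}u\|_{\LL^{p}}^{1-\theta}\,\|D\Delta_{s,h}u\|_{\LL^{p}}^{\theta}$ with $\theta=n(\tfrac1p-\tfrac1q)$ (plus harmless lower--order terms in the local version), combined with $\|D\Delta_{s,h}u\|_{\LL^{p}}\leq 2\|Du\|_{\LL^{p}}$; this yields $|h|^{1-\theta}=|h|^{\alpha}$ for all $1<p\leq q$ in one stroke. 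The endpoint defect is immaterial for how Theorem~\ref{nikol} is used in the proof of Theorem~\ref{th1}, where only open ranges of exponents enter the iteration, but as a proof of statement (ii) as written it should be fixed.
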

\noindent 
We refer to sections 30--32 in \cite{tar} for a proof of this theorem. In fact, the above statements follow by localizing the
corresponding results proved for functions defined on $\Rn$ in \cite{tar} by simply using a smooth cut--off function.

We shall require some further elementary notions from convex analysis, all of which are
discussed in the scalar case $N=1$ in \cite{et}. However, as we shall briefly demonstrate below,
the relevant parts easily extend to the vectorial case $N>1$ too.
Let $F\colon \M \to \R$ satisfy the $(p,q)$ growth condition:
$$
c_{1}|\xi |^{p}-c_{2} \leq F(\xi ) \leq c_{2}(|\xi |^{q}+1),
$$
where $0<c_{1} \leq c_{2}$ and $1<p\leq q < \infty$. Its polar (or Fenchel conjugate)
integrand is defined by
\begin{equation}\label{polardef}
F^{\ast}(\zeta ) := \sup_{\xi \in \M} \Bigl( \langle \zeta , \xi \rangle - F(\xi )
\Bigr) , \quad \zeta \in \M .
\end{equation}
Hereby $F^{\ast}\colon \M \to \R$ is convex and has $(q^{\prime},p^{\prime})$
growth, where $p^{\prime}$, $q^{\prime}$ are the H\"{o}lder conjugate exponents of
$p$, $q$, respectively. More precisely, as can readily be checked, we have
\begin{equation}\label{polarpq}
c_{3}|\zeta |^{q^{\prime}}-c_{2} \leq F^{\ast}(\zeta ) \leq c_{4}|\zeta |^{p^{\prime}}+c_{2}
\end{equation}
for all $\zeta \in \M$, where $c_{3} = c_{2}^{-\frac{1}{q-1}}(1-\frac{1}{q})q^{-\frac{1}{q-1}}$ and
$c_{4}=c_{1}^{-\frac{1}{p-1}}(1-\frac{1}{p})p^{-\frac{1}{p-1}}$. One can check that the
bipolar integrand $F^{\ast \ast} := (F^{\ast})^{\ast}$ equals $F$ at $\xi$ if and only if $F$
is lower semicontinuous and convex at $\xi$, and more generally, that it is the convex
envelope of $F$. In particular, $F^{\ast \ast} = F$ precisely when $F$ is convex and lower
semicontinuous (the latter being a consequence of the former when, as here, $F$ is
real--valued).

The definition of polar integrand means that we have the Young--type inequality
\begin{equation}\label{young}
\langle \zeta , \xi \rangle \leq F^{\ast}(\zeta ) + F^{\ast \ast}(\xi )
\end{equation}
for all $\zeta$, $\xi \in \M$. Notice that for a given $\xi$ we have equality
in (\ref{young}) precisely for $\zeta \in \partial F^{\ast \ast}(\xi )$, the subgradient
for $F^{\ast \ast}$ at $\xi$. Furthermore, we record that $F$ is strictly convex precisely
when $F^{\ast}$ is $\CC^1$, and that in this case we also have
\begin{equation}\label{c1}
(F^{\ast})^{\prime}(F^{\prime}(\xi )) = \xi
\end{equation}
for all $\xi \in \M$.

We now specialize to integrands satisfying standard $p$--growth and convexity conditions,
and so assume that $F\colon \M \to \R$ is a $\CC^1$ function satisfying
\begin{equation}\label{pgr1}
|F(\xi )| \leq L(|\xi |^p +1)
\end{equation}
and
\begin{equation}\label{pconvex1}
\xi \mapsto F(\xi )-\ell |\xi |^{p} \quad \mbox{ is convex,}
\end{equation}
where $0< \ell \leq L < \infty$ and $1<p<\infty$. Note that (\ref{pgr1}) is (H1)
with $\mu = 1$ ($q=p$ and slightly larger $L$) and (\ref{pconvex1}) is (H2) with $\mu =0$.
The polar integrand $F^{\ast}\colon \M \to \R$
is then strictly convex and $\CC^1$, and it satisfies the $p^{\prime}$--growth condition:
$$
c_{1}|\zeta |^{p^{\prime}} - c_{2} \leq F^{\ast}(\zeta ) \leq c_{2}(|\zeta |^{p^{\prime}}+1)
$$
for all $\zeta \in \M$, where $c_{1}=c_{1}(L,p)> 0$, $c_{2}=c_{2}(\ell ,p) > 0$ and
$p^{\prime}=p/(p-1)$. By standard arguments, for a given $g \in \WW^{1,p}(\Omega , \RN )$, the problem
of minimizing $\int_{\Omega} \! F(Dv)$ over $v \in \WW^{1,p}_{g}(\Omega , \RN )$ admits a
unique solution $u$. This minimizer is also the unique weak solution
$u \in \WW^{1,p}_{g}(\Omega , \RN )$ to the Euler--Lagrange equation:
$$
\int_{\Omega} \! \langle F^{\prime}(Du),D\varphi \rangle = 0
$$
for all $\varphi \in \WW^{1,p}_{0}(\Omega , \RN )$. In view of the Young--type inequality
(\ref{young}), and the subsequent remark, we have for the minimizer $u$
the extremality relation:
\begin{equation}\label{extremality}
\langle F^{\prime}(Du),Du \rangle = F^{\ast}(F^{\prime}(Du))+F(Du)
\end{equation}
valid pointwise almost everywhere on $\Omega$. Hence for any row--wise solenoidal field
$\sigma \in \LL^{p^{\prime}}(\Omega , \M )$ it follows that
$$
\int_{\Omega} \Bigl( \langle F^{\prime}(Du),Du \rangle -F^{\ast}(F^{\prime}(Du)) \Bigr)
\geq \int_{\Omega} \Bigl( \langle \sigma ,Du \rangle -F^{\ast}(\sigma ) \Bigr) .
$$
Because $u-g \in \WW^{1,p}_{0}(\Omega , \RN )$ and $\sigma$ is row--wise solenoidal
and $p^{\prime}$--integrable, we have $\int_{\Omega} \! \langle \sigma ,Du \rangle =
\int_{\Omega} \! \langle \sigma ,Dg \rangle$. Consequently, $F^{\prime}(Du)$ is the unique
maximizer of the functional
\begin{equation}\label{dualfunct}
\sigma \mapsto \int_{\Omega} \! \Bigl( \langle \sigma , Dg \rangle - F^{\ast}(\sigma ) \Bigr)
\end{equation}
over all row--wise solenoidal fields $\sigma \in \LL^{p^{\prime}}(\Omega , \M )$. The extremality
relation (\ref{extremality}) can also be expressed in terms of $\sigma^{\ast} := F^{\prime}(Du)$
and then reads as
\begin{equation}\label{extra}
\langle \sigma^{\ast},(F^{\ast})^{\prime}(\sigma^{\ast}) \rangle = F^{\ast}(\sigma^{\ast})
+ F((F^{\ast})^{\prime}(\sigma^{\ast})),
\end{equation}
where $(F^{\ast})^{\prime}(\sigma^{\ast})=Du$ being row--wise curl--free is merely
a restatement of the Euler--Lagrange equation for the maximization problem of the
functional (\ref{dualfunct}) over solenoidal fields.

\section{Proof of Theorem \ref{th1}}

Throughout this section we let $F \colon \M \to \R$ be a $\CC^1$ integrand
satisfying
\begin{equation}\label{pconv}
\xi \mapsto F(\xi )-\ell |\xi |^p \quad \mbox{ is convex }
\end{equation}
and
\begin{equation}\label{qgrow}
0 \leq F(\xi ) \leq c(|\xi |^{q}+1)
\end{equation}
where $1<p \leq q < \infty$ and $\ell$, $c > 0$ are constants.
We shall impose additional conditions on the exponents $p$ and $q$ as we go along.
It is a routine matter to check that $\mathfrak{F}( v, \Omega ) =\int_{\Omega} \! F(Dv)$ under the
assumptions (\ref{pconv}), (\ref{qgrow}) is a strictly convex, lower semicontinuous and proper
functional on $\WW^{1,p}(\Omega , \RN )$. Hence for a given $g \in \WW^{1,p}(\Omega , \RN )$ with
$\mathfrak{F}(g,\Omega ) < \infty$ the existence and uniqueness of a minimizer $u$ in the Dirichlet
class $\WW^{1,p}_{g}(\Omega , \RN )$ is then evident.

We split the proof of Theorem \ref{th1} in two parts and start with the following preliminary result. 
We state it as a separate result because we believe it could have independent interest.
It gives conditions for the Euler--Lagrange equation to hold for the minimizer $u$, and also contains a first
higher integrability result. 

\begin{proposition}\label{dual}
Assume $F \colon \M \to \R$ is $\CC^1$ and satisfies (\ref{pconv}), (\ref{qgrow}).
For $g \in \WW^{1,p}(\Omega ,\RN )$ with $F(Dg) \in \LL^{1}(\Omega )$, let
$u \in \WW^{1,p}_{g}(\Omega , \RN )$ denote the unique $F$--minimizer. We then
have the following two statements (where $F^\ast$ denotes the polar of $F$):
\begin{itemize}
\item[(i)] If $g \in \WW^{1,q}(\Omega , \RN )$, then $F^{\ast}(F^{\prime}(Du)) \in \LL^{1}(\Omega )$
and $F^{\prime}(Du)$ is row--wise solenoidal.
\item[(ii)] If $q \leq \frac{np}{n-1}$, then $F^{\ast}(F^{\prime}(Du)) \in \LL^{1}_{\rm loc}(\Omega )$
and $F^{\prime}(Du)$ is row--wise solenoidal.
\end{itemize}
Hence in both cases (i)--(ii), $u$ is in particular an $F$--extremal
and $F^{\prime}(Du) \in \LL^{q^{\prime}}(\Omega , \M )$, where $q^{\prime}=q/(q-1)$.
\end{proposition}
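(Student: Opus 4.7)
The plan is to sidestep the unavailability of the Euler-Lagrange system for $F$ under $(p,q)$-growth with $q>p$ by passing to a regularized family of problems of standard $p$-growth type and exploiting convex duality. I would approximate $F$ by a sequence of strictly convex $\CC^1$ integrands $F_k\colon\M\to\R$ with standard $p$-growth upper bounds (depending on $k$), uniform $p$-coercivity from below, and $F_k\nearrow F$ pointwise; for instance, via the infimal convolution $F\,\square\,\tfrac{k}{p}|\cdot|^p$, possibly supplemented by a small strongly convex perturbation to ensure strict convexity and $\CC^1$ regularity. The structural fact I would exploit is that the polar satisfies $F_k^{\ast}\geq F^{\ast}$, so it inherits \emph{uniformly in $k$} the lower bound $F_k^{\ast}(\zeta)\geq c|\zeta|^{q'}-C$ coming from the $q$-growth of $F$. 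For each $k$, classical theory yields a unique minimizer $u_k\in\WW^{1,p}_g(\Omega,\RN)$ of $\int_\Omega F_k(Dv)$ satisfying the Euler-Lagrange equation; the dual field $\sigma_k:=F_k'(Du_k)\in\LL^{p'}(\Omega,\M)$ is then row-wise solenoidal, and the pointwise Young equality, integrated and combined with $u_k-g\in\WW^{1,p}_0$, gives
\[
\int_\Omega F_k^{\ast}(\sigma_k)\,\dx+\int_\Omega F_k(Du_k)\,\dx=\int_\Omega\langle\sigma_k,Dg\rangle\,\dx.
\]

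For case (i), where $g\in\WW^{1,q}$, I would bound the right-hand side via a Young inequality with a small parameter together with the uniform $q'$-lower bound: $|\langle\sigma_k,Dg\rangle|\leq\varepsilon|\sigma_k|^{q'}+C_\varepsilon|Dg|^q\leq c\varepsilon(F_k^{\ast}(\sigma_k)+1)+C_\varepsilon|Dg|^q$. Choosing $\varepsilon$ small absorbs the $F_k^{\ast}$-term on the left and gives the key $k$-uniform estimate
\[
\int_\Omega F_k^{\ast}(\sigma_k)+\int_\Omega F_k(Du_k)\leq C\Bigl(\int_\Omega|Dg|^q+1\Bigr),
\]
so that $\sigma_k$ stays bounded in $\LL^{q'}$ and $Du_k$ in $\LL^p$. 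Weak subsequential limits $u_k\weak u^{\ast}$ in $\WW^{1,p}$ and $\sigma_k\weak\sigma$ in $\LL^{q'}$ then exist, with $\sigma$ row-wise solenoidal. Monotone convergence $F_k\nearrow F$, lower semicontinuity of $\int F$ and $\int F^{\ast}$, minimality, and strict convexity from (H2) identify $u^{\ast}=u$ and upgrade $u_k\to u$ to strong convergence in $\WW^{1,p}$. Passing to the limit in the duality identity and using Fatou then forces equality in the integrated Young inequality $\langle\sigma,Du\rangle\leq F^{\ast}(\sigma)+F(Du)$, which in turn forces $\sigma=F'(Du)$ almost everywhere, proving (i).

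For case (ii), where only $g\in\WW^{1,p}$, I would localize. Fix $B_{r_0}\Subset B_{R_0}\Subset\Omega$; Fubini applied to $u\in\WW^{1,p}(B_{R_0},\RN)$ produces a set of a.e.\ $\rho\in(r_0,R_0)$ for which $u|_{\partial B_\rho}\in\WW^{1,p}(\partial B_\rho,\RN)$. The trace-extension theorem recalled in the excerpt (sharp precisely when $q\leq\tfrac{np}{n-1}$) then supplies $\tilde g_\rho\in\WW^{1,q}(B_\rho,\RN)$ agreeing with $u$ on $\partial B_\rho$; uniqueness of the $F$-minimizer implies $u|_{B_\rho}$ is itself the minimizer in $\WW^{1,p}_{\tilde g_\rho}(B_\rho,\RN)$. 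Applying (i) on $B_\rho$ yields $F^{\ast}(F'(Du))\in\LL^1(B_\rho)$ and row-wise solenoidality on $B_\rho$, whence the local statements on $B_{r_0}$ follow as $\rho\searrow r_0$. The main obstacle I anticipate is the uniform-in-$k$ bound from case (i): the $p$-growth constants of $F_k$ blow up as $k\to\infty$, so the natural $\LL^{p'}$-bound on $\sigma_k$ degenerates. The rescuing observation is that the \emph{upper} $q$-growth of $F$ translates, via duality, into a \emph{lower} $q'$-bound on $F_k^{\ast}$ that is uniform in $k$; combined with the small-parameter Young inequality this is what closes the estimate (and correctly delivers $\sigma$ in $\LL^{q'}$ rather than $\LL^{p'}$). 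Identifying the weak limit $\sigma$ as $F'(Du)$ through the equality case of Young is a further delicate point that leans on the strict convexity from (H2).
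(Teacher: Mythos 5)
Your overall strategy is the paper's: regularize $F$ by strictly convex $\CC^1$ integrands $F_k\nearrow F$ of standard $p$-growth (the paper builds them by truncating the polar of $G=F-\tfrac{\ell}{2}|\cdot|^p$, mollifying and adding back $\tfrac{\ell}{2}|\xi|^p$), use that $F_k\le F$ forces $F_k^\ast\ge F^\ast$ and hence a $k$-uniform $q'$-coercive lower bound on $F_k^\ast$, combine the integrated extremality relation with $\int_\Omega\langle\sigma_k,Du_k\rangle=\int_\Omega\langle\sigma_k,Dg\rangle$ and a small-parameter Young inequality to bound $\sigma_k=F_k'(Du_k)$ in $\LL^{q'}$, prove strong $\WW^{1,p}$ convergence $u_k\to u$, and for (ii) localize to good balls and replace $g$ by a $\WW^{1,q}$ extension of the trace (the paper takes the harmonic extension, which lies in $\WW^{1,\frac{np}{n-1}}$). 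Up to the choice of regularization this matches the paper step by step.

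There is, however, a genuine gap in your final identification of the weak limit. You propose to pass to the limit in the duality identity and force equality in the integrated Young inequality for the pair $(\sigma,Du)$, i.e.\ to use $\int_\Omega\langle\sigma,Du\rangle=\int_\Omega\langle\sigma,Dg\rangle$ and then conclude $\sigma=F'(Du)$ a.e. But at this stage you only know $\sigma\in\LL^{q'}(\Omega,\M)$ and $Du\in\LL^{p}(\Omega,\M)$, and since $q\ge p$ we have $q'\le p'$, so $\tfrac{1}{q'}+\tfrac{1}{p}\ge 1$ with strict inequality when $q>p$: the product $\langle\sigma,Du\rangle$ need not be integrable, and the solenoidality of $\sigma$ cannot be tested against $D(u-g)$ with $u-g$ merely in $\WW^{1,p}_0$ (approximating by $\CC^\infty_c$ maps requires $\sigma\in\LL^{p'}$, which is the stronger space here). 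So the step "equality in Young forces $\sigma=F'(Du)$" is not justified as written; note that the local $q$-integrability of $Du$ that would repair the pairing is Theorem \ref{th1}, whose proof uses this proposition, so invoking it would be circular. The paper avoids this entirely: from strong convergence $Du_k\to Du$ (a.e.\ along a subsequence) and the locally uniform convergence $F_k'\to F'$ it gets $\sigma_k\to F'(Du)$ in measure, so the weak-$\LL^{q'}$ limit is identified pointwise; Fatou then gives $F'(Du)\in\LL^{q'}$ and $F^\ast(F'(Du))\in\LL^1$, and solenoidality passes to the limit by testing against fixed $\CC^\infty_c$ maps. Since you already claim strong convergence, you should identify $\sigma$ this way instead. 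Related caution: the upgrade from weak to strong $\WW^{1,p}$ convergence does not follow from "strict convexity from (H2)" alone; the paper uses the $k$-uniform strong $p$-convexity of the $F_k$ together with Lemma \ref{V} and the energy convergence $\int F_k(Du_k)\to\int F(Du)$. With your inf-convolution $F\,\square\,\tfrac{k}{p}|\cdot|^p$ you must verify that a $k$-independent strong $p$-convexity modulus survives (or build it in, as the paper does, by regularizing only $G=F-\tfrac{\ell}{2}|\cdot|^p$ and adding the $p$-th power back afterwards), and likewise that $F_k'\to F'$ locally uniformly.
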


\begin{remark}\label{dualrem}
Note that in (i) above, apart from $1 < p \leq q < \infty$, we do not impose any conditions on
the exponents $p$ and $q$.
Furthermore, as the reader can observe from the proof below, we in fact establish that
$\sigma^{\ast} := F^{\prime}(Du)$ is the unique maximizer of the dual problem that consists in
maximizing the functional
$$
\sigma \mapsto \int_{\Omega} \! \Bigl( \langle Dg , \sigma \rangle - F^{\ast}(\sigma ) \Bigr)
$$
over row--wise solenoidal fields $\sigma \in \LL^{q^{\prime}}(\Omega , \M )$, where $F^{\ast}$
denotes the polar integrand of $F$ and $q^{\prime}=q/(q-1)$.
\end{remark}

\begin{proof}
We start by constructing a class of auxiliary problems, whose solutions on the one
hand approximate the minimizer $u$, and on the other can be dealt with by standard means.
The construction might seem a bit elaborate at first, but all properties that we establish
play a role in the proof, though some only at a later stage of the proof of the main result.
Put $$G(\xi ) := F(\xi ) -\frac{\ell}{2}|\xi |^{p},$$ $\xi \in \M.$ Then
$G-\frac{\ell}{2}|\cdot |^p$ is convex by (\ref{pconv}) so that in particular
$G(\xi ) -\frac{\ell}{2}|\xi |^{p} \geq G(0)+\langle G^{\prime}(0),\xi \rangle$ for
all $\xi$, and hence invoking also (\ref{qgrow}) we find a (new) constant $c>0$ such
that
\begin{equation}\label{ggrow}
c(|\xi |^{q}+1) \geq G(\xi ) \geq \frac{1}{c}|\xi |^{p}-c
\end{equation}
for all $\xi \in \M$. Consequently, the polar of $G$,
$$
G^{\ast}(z) := \sup_{\xi \in \M} \Bigl( \langle \xi ,z \rangle - G(\xi ) \Bigr)
$$
is a real--valued convex function satisfying a $(q^{\prime},p^{\prime})$ growth
condition, where $q^{\prime}$, $p^{\prime}$ denote the H\"{o}lder conjugate exponents of
$q$, $p$, respectively (compare with (\ref{polarpq})).

For each $k >0$ define
$$
G_{k}(\xi ) := \max_{|z| \leq k} \Bigl( \langle \xi ,z \rangle -G^{\ast}(z) \Bigr) .
$$
Then $G_{k}$ is a real--valued convex, globally $k$--Lipschitz function, and because $G$
is (lower semicontinuous and) convex we have that
$$
G_{k}(\xi ) \nearrow G^{\ast \ast}(\xi ) = G(\xi ) \quad \mbox{ as } \quad
k \nearrow \infty
$$
pointwise in $\xi \in \M$. Define
$$\tilde{G}_{k}(\xi ) := \max \{ G_{k}(\xi ),\frac{1}{c}|\xi |^{p}-c \}.$$ In view of
(\ref{ggrow}) we still have that $\tilde{G}_{k}(\xi ) \nearrow G(\xi )$ as
$k \nearrow \infty$. Since $p>1$ and $G_k$ is $k$--Lipschitz there exist numbers
$r_k > 0$ such that $r_{k} \nearrow \infty$ as $k \nearrow \infty$ and
$\tilde{G}_{k}(\xi ) = \frac{1}{c}|\xi |^{p}-c$ for $|\xi | \geq r_{k}-1$. Define
$$
H_{k}(\xi ) := \left\{
\begin{array}{ll}
\tilde{G}_{k}(\xi ) & \mbox{ when } |\xi | \leq r_{k}\\
\frac{p}{c}r_{k}^{p-1}|\xi | - \frac{p-1}{c}r_{k}^{p}-c & \mbox{ when } |\xi | > r_{k}.
\end{array}
\right.
$$
It is not hard to check that $H_k$ is convex and globally $m_k$--Lipschitz (we may
take any $m_{k} \geq \frac{p}{c}r_{k}^{p-1}$). Moreover,
\begin{equation}\label{fenapprox}
H_{k}(\xi ) \nearrow G^{\ast \ast}(\xi ) = G(\xi ) \quad \mbox{ as } \quad
k \nearrow \infty
\end{equation}
pointwise in $\xi \in \M$. Next we regularize $H_k$ by use of the
following standard radially symmetric and smooth convolution kernel
$$
\Phi (\xi ) := \left\{
\begin{array}{ll}
c \exp \left( \frac{1}{|\xi |^{2}-1} \right) & \mbox{ for } |\xi | < 1\\
0 & \mbox{ for } | \xi | \geq 1 ,
\end{array}
\right.
$$
where the constant $c=c(nN)$ is chosen such that $\int_{\M} \! \Phi = 1$, and
for each $\varepsilon > 0$ we put
$\Phi_{\varepsilon}(\xi ) := \varepsilon^{-nN}\Phi (\varepsilon^{-1}\xi )$.
It is routine to check that the mollified function $\Phi_{\varepsilon} \ast H_{k}$
(as usual defined by convolution) is convex and $\CC^{\infty}$, and since
$H_k$ is convex and $m_k$--Lipschitz that
\begin{equation}\label{klip}
H_{k}(\xi ) \leq (\Phi_{\varepsilon} \ast H_{k})(\xi ) \leq H_{k}(\xi ) +m_{k}\varepsilon
\end{equation}
holds for all $\xi \in \M$. For integers $k>1$ and sequences $(\delta_{k})$,
$(\mu_{k}) \subset (0,\infty )$ (specified at (\ref{delmy}) below), we define
\begin{equation}\label{fk}
F_{k}(\xi ) := (\Phi_{\delta_{k}} \ast H_{k})(\xi ) - \mu_{k}+
\frac{\ell}{2}|\xi |^{p} .
\end{equation}
Then we have for all $\xi \in \M$ and $k>1$:
\begin{eqnarray*}
F_{k}(\xi ) &\leq& H_{k}(\xi ) +m_{k}\delta_{k}-\mu_{k}+\frac{\ell}{2}|\xi |^{p}\\
           &\leq& H_{k+1}(\xi ) +m_{k}\delta_{k}-\mu_{k}+\frac{\ell}{2}|\xi |^{p}\\
&\leq& (\Phi_{\delta_{k+1}}\ast H_{k+1})(\xi ) +m_{k}\delta_{k}-\mu_{k}+ \frac{\ell}{2}|\xi |^{p}\\
&=& F_{k+1}(\xi )+\mu_{k+1}+m_{k}\delta_{k}-\mu_{k}.
\end{eqnarray*}
where we used \eqref{klip} and the monotonicity of the sequence $H_k$.
Hence taking
\begin{equation}\label{delmy}
\delta_{k} := \frac{1}{k^{2}m_{k}} \mbox{ and } \mu_{k}:= \frac{1}{k-1}
\end{equation}
 we have that $F_{k}(\xi ) \nearrow F(\xi )$ as $k \nearrow \infty$ pointwise in $\xi$. It follows
in particular from Dini's Lemma that the convergence is locally uniform in $\xi$.
We note that $F_k$ is $\CC^1$ on $\M$, $\CC^{\infty}$ on $\M \setminus \{ 0 \}$, and
that it is $\CC^2$ on $\M$ when $p \geq 2$.
Next we check that also $F_{k}^{\prime}(\xi ) \to F^{\prime}(\xi )$ locally uniformly in $\xi$
as $k \to \infty$. To that end assume that $\xi_k \to \xi$ and consider $(F_{k}^{\prime}(\xi_{k}))$.
Because difference--quotients of convex functions are increasing in the increment, we have for
all $\eta \in \M$ and $0< |t| \leq 1$:
\begin{eqnarray*}
\Bigl| \langle F_{k}^{\prime}(\xi_{k})-F^{\prime}(\xi),\eta \rangle \Bigr| &\leq& \left|
\frac{F_{k}(\xi_{k}+t\eta )-F_{k}(\xi_{k})-\langle F^{\prime}(\xi ),t\eta \rangle}{t}\right|\\
&\leq& \Bigl| F_{k}(\xi_{k}+\eta )-F_{k}(\xi_{k})-\langle F^{\prime}(\xi ),\eta \rangle \Bigr| .
\end{eqnarray*}
Consequently, we have for all $\eta \in \M$ that
$$
\limsup_{k \to \infty}\Bigl| \langle F_{k}^{\prime}(\xi_{k})-F^{\prime}(\xi),\eta \rangle \Bigr|
\leq \Bigl| F(\xi +\eta )-F(\xi )-\langle F^{\prime}(\xi ),\eta \rangle \Bigr| ,
$$
and since $F$ in particular is differentiable at $\xi$ we conclude that the left--hand side
must vanish. This proves the asserted local uniform convergence of derivatives.
Finally, we also record that $F_{k}-\frac{\ell}{2}|\cdot |^p$ is convex, that
\begin{equation}\label{d2fk}
\| F^{\prime \prime}_{k}(\xi ) \| \leq c_{k}(|\xi |^{p-2}+1)
\end{equation}
for all $\xi \in \M \setminus \{ 0 \}$, where $c_{k}$ are positive real constants (possibly
$c_{k} \nearrow \infty$ of course). It is also easy to see that $F_k$ satisfy a uniform $(p,q)$
growth condition.

Let $u_{k} \in \WW^{1,p}_{g}(\Omega ,\RN )$ denote the unique
$F_k$--minimizer, and recall from above that the row--wise solenoidal matrix field
$\sigma_{k} := F^{\prime}_{k}(Du_{k}) \in \LL^{p^{\prime}}(\Omega , \M )$ is a solution to
the dual problem that consists in maximizing the functional
$$
\int_{\Omega} \! \Bigl( \langle \sigma , Dg \rangle - F_{k}^{\ast}(\sigma ) \Bigr)
$$
over row--wise solenoidal matrix fields $\sigma \in \LL^{p}(\Omega , \M )$,
where $F_{k}^{\ast}$ denotes the polar of $F_k$.
As the $F_k$ satisfy a uniform
$(p,q)$ growth condition, the $F_{k}^{\ast}$ satisfy a uniform $(q^{\prime},p^{\prime})$
growth condition, and it is not difficult to check that
$F^{\ast}_{k}(\zeta ) \searrow F^{\ast}(\zeta )$ as $k \nearrow \infty$ pointwise in $\zeta$.
Furthermore, we record the extremality relation
\begin{equation}\label{extrerel}
\langle \sigma_{k},Du_{k} \rangle = F_{k}^{\ast}(\sigma_{k})+F_{k}(Du_{k})
\quad \mbox{ a.e.~on } \Omega
\end{equation}
that holds for all $k>1$, and that, since $\sigma_k \in \LL^{p^{\prime}}$ is row--wise
solenoidal and $u_{k}-g \in \WW^{1,p}_{0}(\Omega ,\RN )$,
\begin{equation}\label{extrerela}
\int_{\Omega} \! \langle \sigma_{k},Du_{k} \rangle =
\int_{\Omega} \! \langle \sigma_{k},Dg \rangle .
\end{equation}
Our next goal is to show that $u_{k} \to u$ strongly in $\WW^{1,p}$. To that end,
we start by observing that
$$
\int_{\Omega} \! \bigl( \frac{1}{c}|Du_{k}|^{p}-c \Bigr) \leq \int_{\Omega} \! F_{k}(Du_{k})
\leq \int_{\Omega} \! F(Dg) < \infty
$$
so $(u_{k})$ is bounded in $\WW^{1,p}(\Omega , \RN )$. Let $(u_{k'})$ be a subsequence.
Then by the reflexivity of $\WW^{1,p}$, it admits a further subsequence $(u_{k''})$ that
converges weakly to some $v$ in $\WW^{1,p}$. By Mazur's lemma, $\WW^{1,p}_{g}(\Omega , \RN )$
is also $\WW^{1,p}$--weakly closed, so $v \in \WW^{1,p}_{g}(\Omega , \RN )$, and
relabelling the subsequence we write simply $u_{k} \rightharpoonup v$. Now, by Mazur's Lemma,
we get for each $k>1$
$$
\liminf_{j \to \infty} \int_{\Omega} \! F_{k}(Du_{j}) \geq \int_{\Omega} \! F_{k}(Dv),
$$
and since $F_{k} \nearrow F$, we find by monotone convergence and minimality of $u$
$$
\liminf_{k \to \infty} \int_{\Omega} \! F_{k}(Du_{k}) \geq \int_{\Omega} \! F(Dv)
\geq \int_{\Omega} \! F(Du).
$$
Using first that $u_k$ is $F_k$--minimizing, and then monotone convergence, yield
$$
\limsup_{k \to \infty} \int_{\Omega} \! F_{k}(Du_{k}) \leq \limsup_{k \to \infty}
\int_{\Omega} \! F_{k}(Du) = \int_{\Omega} \! F(Du),
$$
and by comparing this with the foregoing inequalities we deduce that
\begin{equation}\label{strong}
\int_{\Omega} \! F_{k}(Du_{k}) \to \int_{\Omega} \! F(Du) = \int_{\Omega} \! F(Dv).
\end{equation}
By uniqueness of $F$--minimizers, $v=u$. To deduce that the convergence is actually
strong we use the uniform $p$--convexity of the $F_k$, we have that $F_{k}-\frac{\ell}{2}|\cdot |^p$
is convex for all $k>1$. So, as $F_k$ is $\CC^1$, we deduce from Lemma \ref{V} that
there exists a constant $c>0$ such that
$$
c|V(\xi ) - V(\eta ) |^{2} \leq F_{k}(\xi ) -F_{k}(\eta ) - \langle F_{k}^{\prime}(\eta ),
\xi -\eta \rangle
$$
holds for all $\xi$, $\eta \in \M$ and $k>1$. Here $V(\xi ) = V_{p,0}(\xi ) =
|\xi |^{\frac{p-2}{2}}\xi$. Consequently,
\begin{eqnarray*}
c\int_{\Omega} \! |V(Du)-V(Du_{k})|^{2} &\leq& \int_{\Omega} \! \Bigl( F_{k}(Du)-F_{k}(Du_{k})
-\langle F_{k}^{\prime}(Du_{k}),D(u-u_{k})\rangle \Bigr)\\
&=&  \int_{\Omega} \! \Bigl( F_{k}(Du)-F_{k}(Du_{k}) \Bigr) \to 0
\end{eqnarray*}
as $k \to \infty$. It follows that $Du_{k} \to Du$ in measure on $\Omega$ and that
$|V(Du_{k})|^{2}=|Du_{k}|^{p}$ is equi--integrable on $\Omega$, hence, by Vitali's
convergence theorem, that $Du_{k} \to Du$ strongly in $\LL^{p}$.
Since $u_{k}-u \in \WW^{1,p}_{0}(\Omega , \RN )$
we have shown that the (relabelled) subsequence $(u_{k})$ converges strongly to $u$ in
$\WW^{1,p}$.
By the uniqueness of limit we conclude by a standard argument that the full sequence $(u_k )$
converges strongly in $\WW^{1,p}$ to $u$.
It follows in particular that $\sigma_{k}=F_{k}^{\prime}(Du_{k}) \to F^{\prime}(Du)$
in measure on $\Omega$, and so passing to the limit in (\ref{extrerel}) we recover, with
$\sigma^{\ast} := F^{\prime}(Du)$, the pointwise extremality relation
\begin{equation}\label{pextrerel}
\langle \sigma^{\ast} ,Du \rangle = F^{\ast}(\sigma^{\ast} )+F(Du )
\quad \mbox{ a.e.~on } \Omega .
\end{equation}
Up to this point we have not used any of the conditions on the boundary datum $g$ or on the exponents
$p$, $q$ listed in (i)--(ii).
We now assume that $g \in \WW^{1,q}(\Omega , \RN )$ corresponding to (i). Then in view
of (\ref{extrerel}), (\ref{extrerela}) and the uniform $(q^{\prime},p^{\prime})$ growth
of $F_{k}^{\ast}$ we deduce that $(\sigma_{k})$ is bounded in $\LL^{q^{\prime}}(\Omega , \M )$.
Namely,
\begin{eqnarray}
\int_\Omega \bigl( \frac{1}{c}|\sigma_k|^{q^{\prime}}-c \bigr) &\leq& \int_\Omega F^*_k (\sigma_k)=
\int_\Omega \langle \sigma_k,Du_k\rangle -F_k (Du_k)\cr\cr
&=&\int_\Omega \langle \sigma_k,Dg\rangle -F_k (Du_k)\cr\cr
&\leq&
\frac{c}{2}\int_\Omega |\sigma_k|^{q^{\prime}}+c\int_\Omega |Dg|^{q}+\int_\Omega F_k (Du_k)
\end{eqnarray}
and hence
\begin{equation}
\int_\Omega |\sigma_k|^{q^{\prime}}\le c\left(\int_\Omega |Dg|^{q}+\int_\Omega F_k (Du_k)\right)
\end{equation}
Therefore, by Fatou's lemma and by \eqref{strong}, we have that
$$
\| F^{\prime}(Du) \|_{\LL^{q^{\prime}}}^{q^{\prime}} \leq \liminf_{k \to \infty}
\| \sigma_{k} \|_{\LL^{q^{\prime}}}^{q^{\prime}} \leq  c\left(\int_\Omega |Dg|^{q}+\int_\Omega F (Du)\right).
$$
As it is then clear that $F^{\prime}(Du)$ is row--wise solenoidal this proves (i).
Finally, regarding Remark \ref{dualrem}, note that in view of (\ref{extrerel}),
(\ref{extrerela}) the field $\sigma^{\ast}$ is a maximizer of
$$
\sigma \mapsto \int_{\Omega} \! \Bigl( \langle Dg , \sigma \rangle - F^{\ast}(\sigma ) \Bigr)
$$
over row--wise solenoidal fields $\sigma \in \LL^{q^{\prime}}(\Omega , \M )$. By strict
convexity it is then the unique such maximizer.

We next turn to (ii), and assume that $q \leq np/(n-1)$. Since $u \in \WW^{1,p}(\Omega , \RN )$
we can for each $x_0 \in \Omega$ find a ball $B=B(x_{0},R) \subset \Omega$ such
that $u|_{\partial B} \in \WW^{1,p}(\partial B, \RN )$, see for instance \cite{zi}. If $h$ denotes the harmonic
extension of $u|_{\partial B}$ to $B$, then it is well--known that $h \in \WW^{1,\frac{np}{n-1}}(B, \RN )$.
We can now repeat the above argument for (i), where this time we define the auxiliary minimizers $u_k$
with $B$, $h$ substituted for $\Omega$, $g$, respectively.


\end{proof}

Now, we are going to prove Theorem \ref{th1} from the Introduction.
The key new point in the proof is that we estimate the field $\sigma^{\ast}=F^{\prime}(Du)$ using
Proposition \ref{dual} rather than merely by use of the (consequence of the) growth condition (\ref{(H4)}).
The outcome is a better higher integrability estimate for the minimizer. The remaining parts of the proof
are standard in the present context, and consist of a difference--quotient argument applied in the
setting of fractional Sobolev spaces (compare for instance \cite{elm2}) and the regularized problems
defined in the proof of Proposition \ref{dual}.

\begin{proof}[Proof of Theorem \ref{th1}: Conclusion]
Define the integrands $F_k$ and corresponding $F_k$--minimizers
$u_{k}$ of class $\WW^{1,p}_{g}(\Omega ,\RN )$ as in the proof of Proposition \ref{dual} (see
in particular (\ref{fk})). We have shown there that $u_{k} \to u$ strongly in $\WW^{1,p}$
and that $\sigma_{k} := F_{k}^{\prime}(Du_{k}) \to F^{\prime}(Du)$ weakly in $\LL^{q^{\prime}}$ and
in measure on $\Omega$. Furthermore, $F_{k}-\frac{\ell}{2}| \cdot |^{p}$ is convex
and
$$
\int_{\Omega} \! \langle F_{k}^{\prime}(Du_{k}),D\varphi \rangle = 0
$$
for all $\varphi \in \WW^{1,p}_{0}(\Omega , \RN )$. We shall establish uniform
integrability bounds on $(u_{k})$ to conclude the proof. Recall that the auxiliary $V$--function
for the degenerate case $\mu =0$ is defined as
$V(Du_{k})=V_{p,0}(Du_{k})=|Du_{k}|^{\frac{p-2}{2}}Du_{k}$.

Fix $B_{3R} = B(x_{0},3R) \subset \Omega$, an integer $1 \leq s \leq n$, and an increment
$0 \neq h \in (-R,R)$. It follows that
\begin{equation}\label{eldif}
\int_{B_{2R}} \! \langle \Delta_{s,h}F^{\prime}_{k}(Du_{k}),D\varphi \rangle = 0
\end{equation}
for all $\varphi \in \WW^{1,p}_{0}(B_{2R},\RN )$. In particular we may take
$\varphi = \theta \Delta_{s,h}u_{k}$ for $\theta \in \CC^{1}_{c}(B_{2R})$, whereby
$$
\int_{B_{2R}} \! \langle \Delta_{s,h}F^{\prime}_{k}(Du_{k}),\Delta_{s,h}Du_{k}
\rangle \theta = -\int_{B_{2R}} \! \langle \Delta_{s,h}F^{\prime}_{k}(Du_{k}),\Delta_{s,h}u_{k}
\otimes D\theta \rangle
$$
follows. Consequently, taking $\theta$ nonnegative and so $\theta = 1$ on $B_R$ we may use (H2'') (since $F_{k}$ is $\CC^{1}$),
Lemma 3 and H\"{o}lder's inequality to find a constant $c>0$, which in particular is independent
of $k$, such that
\begin{eqnarray}\label{step}
\int_{B_{R}} \! |\Delta_{s,h}V(Du_{k})|^{2} &\leq& c\left( \int_{B_{3R}} \!
|\sigma_{k}|^{q^{\prime}}\right)^{\frac{1}{q^{\prime}}}\left( \int_{B_{2R}}
\! |\Delta_{s,h}u_{k}|^{q} \right)^{\frac{1}{q}}\sup |D\theta | \nonumber\\
&\leq& \tilde{c}\left( \int_{B_{2R}} \! |\Delta_{s,h}u_{k}|^{q} \right)^{\frac{1}{q}},
\end{eqnarray}
where
$$
\tilde{c} := c \sup_{k}\left( \int_{B_{3R}} \!
|\sigma_{k}|^{q^{\prime}}\right)^{\frac{1}{q^{\prime}}}\sup |D\theta |
$$
is finite according to Proposition \ref{dual}. Now to extract information from this
estimate, we recall that $(u_{k})$ in particular is bounded in $\WW^{1,p}$, and that
by the version (ii) of the Sobolev Embedding stated in Theorem \ref{nikol}, $\WW^{1,p}_{\rm loc} \hookrightarrow \BB^{\alpha ,q}_{\infty}$
boundedly, provided $\alpha = 1-n(\frac{1}{p}-\frac{1}{q})$. The condition (\ref{qcond}) on $q$
guarantees that $\alpha \in (0,1]$. Divide (\ref{step}) by $|h|^{\alpha}$, and infer from
the arbitrariness of the ball $B$, the direction $s$ and the increment $h$, that $(V(Du_{k}))$
is bounded in $\BB^{\frac{\alpha}{2} ,2}_{\infty}$ locally on $\Omega$.
Now by version (i) of the Sobolev Embedding stated in Theorem \ref{nikol}, we have that
$\BB^{\frac{\alpha}{2} ,2}_{\infty} \hookrightarrow \LL^{r}_{\rm loc}$ boundedly for each $r< \frac{2n}{n-\alpha}$. Therefore
$(Du_{k})$ is bounded in $\LL^{r}_{\rm loc}$ for each $r< \frac{np}{n-\alpha}$ and hence $(u_{k})$ is bounded
in $\WW^{1,r}_{\rm loc}$ for each $r< \frac{np}{n-\alpha}$.

We can now repeat the above estimation using this improved bound on $(u_{k})$. The details
are as follows. Put
$$
p_{0}:= p, \quad \quad p_{j}:= \frac{np}{n-1+n(\frac{1}{p_{j-1}}-\frac{1}{q})}
$$
for $j \in \N$. Observe that we can rewrite the exponent $\bar{p}$ at (\ref{myst}) as
$$
\bar{p}= \frac{np}{n-\frac{p}{p-1}(1-n(\frac{1}{p}-\frac{1}{q}))} = \frac{n(p-1)}{n-1-\frac{n}{q}},
$$
and that, for $p_{j-1} < \bar{p}$, we have $p_{j-1} < p_{j} < \bar{p}$.
Because
$$
\bar{p} > p \quad \mbox{ precisely when } \quad q < p^{\ast},
$$
a straightforward calculation yields that
$$
p_{j} \nearrow \bar{p} \quad \mbox{ as } \quad j \nearrow \infty .
$$
With these observations in place we  apply the above difference--quotient argument
to deduce that if $(u_{k})$ is bounded in $\WW^{1,r}_{\rm loc}$ for each $r< p_{j-1}$ and
$p \leq p_{j-1} \leq q$, then it is also bounded in $\WW^{1,r}_{\rm loc}$ for each $r<p_{j}$. In view
of Remark \ref{remarque} below it follows that, for $q< \frac{np}{n-1}$, the sequence $(u_{k})$
is bounded in $\WW^{1,q}_{\rm loc}$, while for $\frac{np}{n-1} \leq q < p^{\ast}$ it
is bounded in $\WW^{1,r}_{\rm loc}$ for all $r < \bar{p}$. The conclusion follows
easily from this.
\end{proof}

\begin{remark}\label{remarque}
We record that
$$
p < \frac{np}{n-\frac{p}{p-1}\Bigl( 1-n(\frac{1}{p}-\frac{1}{q}) \Bigr)} < q \quad \mbox{ when }
\quad \frac{np}{n-1}<q<p^{\ast}
$$
and
$$
\frac{np}{n-\frac{p}{p-1}\Bigl( 1-n(\frac{1}{p}-\frac{1}{q}) \Bigr)} \geq q \quad \mbox{ when }
\quad p \leq q \leq \frac{np}{n-1} \quad \mbox{ and } \quad q > \frac{n}{n-1}.
$$
Hence there is integrability improvement locally in $\Omega$ of $F$--minimizers for the full range
of exponents $q$ satisfying (\ref{qcond}). Furthermore, $\bar{p}=\frac{np}{n-1}$ when $q = \frac{np}{n-1}$,
and $\bar{p}=\bar{p}(q)$ is decreasing as a function of $q$ with
$$
\left\{
\begin{array}{ll}
\bar{p} \searrow p \mbox{ as } q \nearrow p^{\ast} & \mbox{ when } 1<p<n\\
{} & {}\\
\bar{p} \searrow \frac{n(p-1)}{n-1} \mbox{ as } q \nearrow \infty & \mbox{ when } p \geq n,
\end{array}
\right.
$$
where we remark that $\frac{n(p-1)}{n-1} \geq p$ for $p \geq n$ with equality precisely when
$p=n$.
\end{remark}

\section{Proof of Theorem \ref{th2}}

\noindent
Throughout this section $u \in \WW^{1,p}_{\loc}(\Omega , \RN ) $
denotes a local $F$--minimizer. For the sake of simplicity,
we shall give the proof in case the integrand $F \colon \M \to \real$ is $\CC^2$ and
satisfies the hypotheses (H1) and (H2'), with $q<\frac{pn}{n-1}$.
The general case can be treated by a suitable  approximation argument, inspired by \cite{ff}
and \cite{elm1}, and also sketched in \cite{CKPdN}.

\noindent
Our aim is to show that $V(Du) \in \WW^{1,2}_{\rm loc}(\Omega ,\M )$, where we recall the definition
of the auxiliary functions as
$$
V( \xi ) := \langle \xi \rangle^{\frac{p-2}{2}}\xi , \quad \quad \langle \xi \rangle
:= \sqrt{\mu^2+|\xi |^{2}}.
$$
For later reference we note that for a $\CC^2$ map $w$ a routine calculation yields
\begin{equation}\label{prelim}
\Bigl| D\Bigl[ V(Dw) \Bigr] \Bigr|^{2} \leq \Bigl( \frac{|p-2|}{2}+1 \Bigr)^{2}
\langle Dw \rangle^{p-2} |D^{2}w|^{2}
\end{equation}
Before proceeding with the proof of Theorem \ref{th2}, we need to carry out an approximation
procedure, which is essentially based on the arguments contained in \cite{CKPdN}. Here we give a
version suitable for our needs, partly for the sake of completeness and partly because the present
set--up differs slightly from that of \cite{CKPdN}.

\noindent
Fix a subdomain with a smooth boundary $\Omega^{\prime} \Subset \Omega$ and take   $k \in \N$,  so large
that we have the continuous embedding
$\WW^{k,2}(\Omega^{\prime}) \hookrightarrow \CC^{2}(\overline{\Omega^{\prime}})$.
For a smooth kernel $\phi \in \CC^{\infty}_{c}(B_{1}(0))$ with $\phi \geq 0$ and $\int_{B_{1}(0)}
\! \phi = 1$, we consider the corresponding family of mollifiers $( \phi_{\eps})_{\eps >0}$
and put $\tilde{u}_{\eps} := \phi_{\eps} \ast u$ on $\Omega^{\prime}$ for each positive
$\eps < \dist (\Omega^{\prime},\partial \Omega )$. By Theorem \ref{th1}, we have that
$Du\in L^{q}_{\mathrm{loc}}$ and hence
\begin{equation}\label{61}
\tu \to u \mbox{ as } \eps \searrow 0 \mbox{ strongly in } \WW^{1,q}(\Omega^{\prime}, \RN )\,.
\end{equation}
Moreover we remark that, for a suitable function $\tilde{\eps}= \tilde{\eps}(\eps )$ with
$\tilde{\eps} \searrow 0$ as $\eps \searrow 0$, we also have
\begin{equation}\label{62}
\tilde{\eps} \int_{\Omega^{\prime}} \! |D^{k}\tu |^{2} \to 0 \mbox{ as } \eps \searrow 0.
\end{equation}
For small $\eps > 0$, we let $\ue \in \WW^{k,2}(\Omega^{\prime}) \cap \WW^{1,p}_{\tu}(\Omega^{\prime})$
denote a minimizer to the functional
$$
v \mapsto \int_{\Omega^{\prime}} \! \Bigl(  F(Dv) +
\frac{\teps}{2}|D^{k}v|^{2} \Bigr)
$$
on the Sobolev class $\WW^{k,2}(\Omega^{\prime}) \cap \WW^{1,p}_{\tu}(\Omega^{\prime})$.
The existence of $\ue$ is easily established by the direct method.
Next two Lemmas are proven in \cite{CKPdN} (see Lemmas 8 and  9 there) in a more general version.
Here we state them in the form needed for our aims.

\begin{lemma}\label{lem61}
For each $\varphi \in \WW^{k,2}(\Omega^{\prime}) \cap \WW^{1,p}_{0}(\Omega^{\prime})$,
\begin{equation}\label{el}
0 = \int_{\Omega^{\prime}} \! \Bigl( \langle F^{\prime}(D\ue ),D\varphi \rangle +\teps
\langle D^{k}\ue , D^{k}\varphi \rangle \Bigr).
\end{equation}
Furthermore, $\ue \in \WW^{2k,2}_{\rm loc}(\Omega^{\prime})$.
\end{lemma}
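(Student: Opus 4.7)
The plan is to derive the Euler--Lagrange equation (\ref{el}) directly from the minimality of $\ue$ by the standard variational argument, and then to upgrade regularity by viewing (\ref{el}) as a $2k$-th order linear elliptic equation with smooth forcing.

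For the first part, fix an admissible test $\varphi \in \WW^{k,2}(\Omega^{\prime}) \cap \WW^{1,p}_{0}(\Omega^{\prime})$. Since $\varphi \in \WW^{1,p}_{0}(\Omega^{\prime})$, the competitor $\ue + t\varphi$ lies in $\WW^{k,2}(\Omega^{\prime}) \cap \WW^{1,p}_{\tu}(\Omega^{\prime})$ for every $t \in \R$, so minimality gives $I^{\prime}(0)=0$ where
$$
I(t) := \int_{\Omega^{\prime}} \Bigl( F(D\ue + t D\varphi) + \tfrac{\teps}{2}|D^{k}\ue + tD^{k}\varphi|^{2} \Bigr),
$$
provided $I$ is differentiable at $0$. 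The quadratic part differentiates trivially and contributes the term $\teps \langle D^{k}\ue, D^{k}\varphi \rangle$. For the nonlinear term, the key point is that the choice of $k$ provides the embedding $\WW^{k,2}(\Omega^{\prime}) \hookrightarrow \CC^{2}(\overline{\Omega^{\prime}})$, so that $D\ue$ and $D\varphi$ are uniformly bounded on $\overline{\Omega^{\prime}}$. Combined with the Lipschitz estimate (\ref{(H4)}) for $F^{\prime}$, this yields an integrable dominating envelope for the difference quotients $t^{-1}(F(D\ue + tD\varphi) - F(D\ue))$ on $|t| \leq 1$, and dominated convergence identifies
$$
\frac{d}{dt}\bigg|_{t=0} \int_{\Omega^{\prime}} F(D\ue + tD\varphi) = \int_{\Omega^{\prime}} \langle F^{\prime}(D\ue), D\varphi \rangle .
$$
Adding the two contributions produces (\ref{el}).

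For the regularity claim, I would read (\ref{el}) as the weak formulation of the Euler--Lagrange PDE
$$
\teps \, L\ue = \div F^{\prime}(D\ue) \qquad \text{in } \Omega^{\prime},
$$
where $L$ is the constant--coefficient strongly elliptic operator of order $2k$ associated to the quadratic form $v \mapsto |D^{k}v|^{2}$. Because $\ue \in \WW^{k,2}(\Omega^{\prime}) \hookrightarrow \CC^{2}(\overline{\Omega^{\prime}})$ and $F \in \CC^{2}$, the composition $F^{\prime}(D\ue)$ lies in $\CC^{1}(\overline{\Omega^{\prime}})$, hence the right--hand side is continuous and in particular in $\LL^{2}_{\loc}(\Omega^{\prime})$. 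Freezing this forcing and iterating the standard difference--quotient argument for $L$ then promotes $\ue$ to $\WW^{2k,2}_{\loc}(\Omega^{\prime})$. The only real obstacle is ensuring that one may differentiate under the integral sign despite the $q$-growth of $F$, but the $\WW^{k,2}$-correction built into the regularised functional defeats this obstacle for free by forcing pointwise bounded competitors. A more general version of both arguments is carried out in \cite{CKPdN}.
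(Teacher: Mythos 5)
Your argument is correct, and it is worth noting that the paper itself gives no proof of this lemma: it simply states that Lemmas \ref{lem61} and \ref{lem62} are proved, in a more general form, as Lemmas 8 and 9 of \cite{CKPdN}. Your self-contained derivation is the natural one and works under the standing assumptions of this section. The first variation step is sound: since $\ue,\varphi\in\WW^{k,2}(\Omega^{\prime})$ and $k$ was chosen so that $\WW^{k,2}(\Omega^{\prime})\hookrightarrow\CC^{2}(\overline{\Omega^{\prime}})$ (here the smoothness of $\partial\Omega^{\prime}$ matters, which the paper has arranged), $D\ue$ and $D\varphi$ are bounded, so the difference quotients $t^{-1}\bigl(F(D\ue+tD\varphi)-F(D\ue)\bigr)$ are uniformly bounded via \eqref{(H4)} (or simply by monotonicity of difference quotients of the convex map $t\mapsto F(D\ue+tD\varphi)$), and dominated convergence on the bounded set $\Omega^{\prime}$ gives \eqref{el}; the competitor $\ue+t\varphi$ is indeed admissible because $\varphi\in\WW^{k,2}(\Omega^{\prime})\cap\WW^{1,p}_{0}(\Omega^{\prime})$. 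For the second claim, testing with $\varphi\in\CC^{\infty}_{c}(\Omega^{\prime})$ exhibits $\ue$ as a weak solution of the constant-coefficient, strongly elliptic (diagonal, hence unproblematic in the vectorial case) equation of order $2k$ with right-hand side $\div F^{\prime}(D\ue)$, and since the section assumes $F\in\CC^{2}$ and $\ue\in\CC^{2}(\overline{\Omega^{\prime}})$, this forcing is continuous, hence in $\LL^{2}_{\rm loc}$, so interior $\LL^{2}$-regularity theory yields $\ue\in\WW^{2k,2}_{\rm loc}(\Omega^{\prime})$. The only caveat you should flag is that this last step uses $F\in\CC^{2}$ (or at least $\CC^{1,1}$) to differentiate $F^{\prime}(D\ue)$; this is harmless here, since the proof of Theorem \ref{th2} is carried out under (H2$'$) with $F\in\CC^{2}$ and the general case is handled by approximation, but it is the point where the more general treatment in \cite{CKPdN} does extra work.
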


\bigskip

\begin{lemma}\label{lem62}
As $\eps \searrow 0$, we have that
$$\qquad\qquad\int_{\Omega^{\prime}}|D\ue -D u|^{ p}\,\dx\to 0$$
and
$$
\int_{\Omega^{\prime}} \! F(D\ue )\,\dx \to \int_{\Omega^{\prime}} \! F(Du)\,\dx.
$$
\end{lemma}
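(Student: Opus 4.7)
The plan is to leverage the minimality of $\ue$ together with the local $F$-minimality of $u$, using the Euler--Lagrange information on $F'(Du)$ provided by Proposition \ref{dual}(ii).

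First I would test the minimality of $\ue$ against $\tu$ itself, obtaining
$$\int_{\Omega'}\!F(D\ue)\,\dx+\frac{\teps}{2}\int_{\Omega'}\!|D^k\ue|^2\,\dx \;\leq\; \int_{\Omega'}\!F(D\tu)\,\dx+\frac{\teps}{2}\int_{\Omega'}\!|D^k\tu|^2\,\dx.$$
The last term on the right vanishes as $\eps \searrow 0$ by \eqref{62}, and Vitali's theorem applied to the continuous $q$-growth integrand $F$ with $D\tu \to Du$ in $\LL^q(\Omega')$ (from \eqref{61}) yields $\int F(D\tu) \to \int F(Du)$. Thus $\limsup_\eps \int_{\Omega'} F(D\ue) \leq \int_{\Omega'} F(Du)$, which combined with coercivity from (H2) also gives a uniform $\LL^p(\Omega')$ bound for $D\ue$. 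Passing to a weak limit $\ue \weak \bar{u}$ in $\WW^{1,p}(\Omega')$, the inclusion $\ue - \tu \in \WW^{1,p}_0(\Omega')$ and the strong convergence $\tu \to u$ in $\WW^{1,p}$ yield $\bar{u} - u \in \WW^{1,p}_0(\Omega')$; weak lower semicontinuity together with the local $F$-minimality of $u$ and the strict convexity from (H2) forces $\bar{u} = u$ by uniqueness. The full family converges weakly by the standard subsequence argument, and squeezing limsup against liminf gives the energy convergence $\int F(D\ue) \to \int F(Du)$, which is the second assertion.

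For the $\LL^p$ strong convergence I would use the uniform $p$-convexity from (H2) and Lemma \ref{V} to write
$$c\int_{\Omega'}\!|V(D\ue)-V(Du)|^2\,\dx \;\leq\; \int_{\Omega'}\!\bigl(F(D\ue)-F(Du)\bigr)\,\dx - \int_{\Omega'}\!\langle F'(Du), D\ue - Du\rangle\,\dx.$$
The first right-hand term tends to $0$ by the energy convergence just established. For the second, I would split $D\ue - Du = (D\ue - D\tu) + (D\tu - Du)$: the first piece lies in $\WW^{1,q}_0(\Omega')$ (using $\ue \in \WW^{k,2} \hookrightarrow \CC^2(\overline{\Omega'})$ and $\ue - \tu$ having zero trace on the smooth $\partial \Omega'$), and hence pairs to $0$ against the row-wise solenoidal field $F'(Du) \in \LL^{q'}_{\mathrm{loc}}$ furnished by Proposition \ref{dual}(ii); the second piece tends to $0$ in $\LL^q$ and pairs against $F'(Du) \in \LL^{q'}$. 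Therefore $V(D\ue) \to V(Du)$ in $\LL^2(\Omega')$, and a standard manipulation with the pointwise bounds from Lemma \ref{V} upgrades this to $D\ue \to Du$ in $\LL^p(\Omega')$.

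The main obstacle is the solenoidality step: one needs $F'(Du)$ to have the right integrability to be paired against $\WW^{1,q}_0(\Omega')$ test fields, and this is precisely where the hypothesis $q<\tfrac{np}{n-1}$ of Theorem \ref{th2} enters through Proposition \ref{dual}(ii). Everything else (the upper bound, the identification of the weak limit, and the passage from $V$-convergence to $\LL^p$-convergence) follows the pattern of the proof of strong convergence carried out in Proposition \ref{dual}.
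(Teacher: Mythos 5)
Your argument is correct, but it is not the one the paper has in mind: the paper gives no proof of Lemma \ref{lem62} at all, deferring to Lemmas 8 and 9 of \cite{CKPdN}, and the convexity argument used there does not require any information on $F'(Du)$. Your first half (comparison of $\ue$ with the admissible competitor $\tu$, then \eqref{61}, \eqref{62}, coercivity, weak compactness, weak lower semicontinuity, local minimality of $u$ and strict convexity to identify the limit and squeeze the energies) is the standard route and is fine. The genuinely different step is the second half: you derive strong convergence from $c|V(\xi)-V(\eta)|^2\le F(\xi)-F(\eta)-\langle F'(\eta),\xi-\eta\rangle$, which obliges you to prove $\int_{\Omega'}\langle F'(Du),D\ue-Du\rangle\to 0$, and for this you need Proposition \ref{dual}(ii) (localized to the local minimizer $u$ on balls $\Subset\Omega$ --- the same implicit localization the paper performs when it invokes Theorem \ref{th1} in this section; note also that Proposition \ref{dual} is stated under \eqref{pconv}, i.e.\ $\mu=0$), the membership $\ue-\tu\in\WW^{1,q}_0(\Omega')$ via the trace characterization on the smooth domain, and the restriction $q<np/(n-1)$. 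You correctly identified the two delicate points (pairing $F'(Du)\in\LL^{q'}$ only against $\WW^{1,q}_0$, not $\WW^{1,p}_0$, test maps; integrability of each term using $Du\in\LL^q(\Omega')$ from Theorem \ref{th1}), so the proof goes through. A lighter alternative, needing no Euler--Lagrange or duality input, is to use the midpoint form of (H2): $F(\tfrac{\xi+\eta}{2})+c\,|V(\xi)-V(\eta)|^2\le\tfrac12\bigl(F(\xi)+F(\eta)\bigr)$; taking $\xi=D\ue$, $\eta=Du$, the right--hand side converges to $\int F(Du)$ by the energy convergence already proved, while $\liminf\int F(\tfrac{Du+D\ue}{2})\ge\int F(Du)$ by weak lower semicontinuity since $\tfrac{Du+D\ue}{2}\weak Du$ in $\LL^p$, whence $\int|V(D\ue)-V(Du)|^2\to 0$ directly. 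What your route buys is an explicit role for the dual field $\sigma^{\ast}=F'(Du)$ already constructed in the paper; what it costs is the extra dependence on Proposition \ref{dual}(ii) and on the exponent restriction at this stage, which the convexity argument avoids.
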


\noindent
We are now ready to prove Theorem \ref{th2}.


\begin{proof}[Proof of Theorem \ref{th2}]
Fix $B_{2R} = B_{2R}(x_{0}) \subset \Omega^{\prime}$,
radii $R \leq r < s  \leq 2R \leq 2$ and a smooth
cut-off function $\rho$ satisfying $1_{B_{r}} \leq \rho \leq 1_{B_{s}}$ and
$|D^{i}\rho | \leq \Bigl( \frac{2}{s-r} \Big)^{i}$ for each $i \in \N$.
According to Lemma \ref{lem61}, we can test the Euler--Lagrange system \eqref{el} with
$\varphi = \rho^{2k} D^{2}_{j}\ue$, for each direction $1 \leq j \leq n$, thus getting
\begin{eqnarray}\label{1to4}
0 &=& \int_{\Omega^{\prime}} \! \Big\langle F^{\prime}(D\ue ),D_{j}^{2}D\ue \Big\rangle \rho^{2k}
+ \int_{\Omega^{\prime}} \! \Big\langle F^{\prime}(D\ue ),D_{j}^{2}\ue \otimes D\Bigl( \rho^{2k} \Bigr)
\Big\rangle \nonumber\\
&&  + \teps \int_{\Omega^{\prime}} \! \Big\langle D^{k}\ue , D^{k} \Bigl( D_{j}^{2}\ue \rho^{2k} \Bigr)
\Big\rangle \nonumber\\
&=:& I + II +  III.
\end{eqnarray}
Integration by parts yields
\begin{eqnarray*}
I &=& -\int_{\Omega^{\prime}} \! \left( \rho^{2k}F^{\prime \prime}(D\ue )\Big[ D_{j}D\ue , D_{j}D\ue\Big]\right)
-\int_{\Omega^{\prime}} \! \left( 2k\Big\langle \rho^{2k-1}D_{j}\rho F^{\prime}(D\ue ) , D_{j}D\ue \Big\rangle \right)\\
&\le &-\int_{\Omega^{\prime}} \!  \rho^{2k}\wei^{p-2}|D_{j}D\ue |^{2}+2k\int_{\Omega^{\prime}} \!
 \rho^{2k-1}|D_{j}\rho| \wei^{q-1} |D_{j}D\ue|
\end{eqnarray*}
where we used assumptions (H2') and  \eqref{(H4)} . Hence, using Young's inequality in the last  integral, we obtain
\begin{eqnarray}\label{i}
I &\leq& -\int_{\Omega^{\prime}} \! \rho^{2k}\wei^{p-2}|D_{j}D\ue |^{2}
+\frac{1}{2}\int_{\Omega^{\prime}} \! \rho^{2k}\wei^{p-2}|D_{j}D\ue |^{2}\cr\cr
&&+c(p,k)\int_{\Omega^{\prime}} \! \rho^{2(k-1)}|D_{j}\rho |^{2}\wei^{2q-p}\cr\cr
&\le &- \frac{1}{2}\int_{\Omega^{\prime}} \! \rho^{2k}\wei^{p-2}|D_{j}D\ue |^{2}+c(p,k)\int_{\Omega^{\prime}} \! \rho^{2(k-1)}|D_{j}\rho |^{2}\wei^{2q-p}.
\end{eqnarray}
Similarly, by virtue of \eqref{(H4)} and Cauchy--Schwarz' inequality, we get
\begin{eqnarray}\label{ii}
II &\leq& c(p,k)\int_{\Omega^{\prime}} \! \wei^{q-1}\rho^{2k-1}|D\rho | |D_{j}^{2}\ue |\cr\cr
&\leq& c(p,k)\int_{\Omega^{\prime}}  \wei^{2q-p}\rho^{2(k-1)}|D\rho |^{2}+
\frac{1}{4}\int_{\Omega^{\prime}} \! \rho^{2k}\wei^{p-2}|D_{j}D\ue |^{2}.
\end{eqnarray}

\noindent In order to estimate  $III$, we argue as in \cite{CKPdN} writing
$$
III = \teps \int_{\Omega^{\prime}} \! \Big\langle D^{k}\ue , D_{j}D^{k} \Bigl( \rho^{2k}D_{j}\ue \Bigr) -D^{k}
\Bigl( D_{j}\bigl(\rho^{2k}\bigr)D_{j}\ue \Bigr) \Big\rangle
$$
and integrating the first term by parts,
\begin{eqnarray*}
III &=& -\teps \int_{\Omega^{\prime}} \! \Bigl( \Big\langle D_{j}D^{k}\ue , D^{k}\Bigl( \rho^{2k}D_{j}\ue \Bigr)
\Bigr\rangle  -\teps \int_{\Omega^{\prime}} \! \Big\langle D^{k}\ue , D^{k}\Bigl( D_{j}\bigl( \rho^{2k} \bigr)
D_{j}\ue \Bigr) \Bigr\rangle \Bigr)\\
 &=:& III_{1}+III_{2}.
\end{eqnarray*}
We estimate these terms by use of Cauchy--Schwarz' inequality, Leibniz' product formula and the assumptions
on $D^{i}\rho$ (simplifying also by use of $s-r \leq 1$):
\begin{eqnarray*}
III_{1} &\leq& -\teps \int_{\Omega^{\prime}} \! \rho^{2k} |D_{j}D^{k}\ue |^{2} + \frac{c_{k}\teps}{(s-r)^{k}}
\int_{\Omega^{\prime}} \! \rho^{k}|D_{j}D^{k}\ue | \sum_{i=0}^{k-1} |D^{i}D_{j}\ue |\\
&\leq& -\frac{2\teps}{3}\int_{\Omega^{\prime}} \! \rho^{2k} |D_{j}D^{k}\ue |^{2} +\frac{c_{k}\teps}{(s-r)^{2k}}
\int_{B_{2R}} \! \Bigl( \sum_{i=0}^{k-1} |D^{i}D_{j}\ue | \Bigr)^{2}\\
&\leq& -\frac{2\teps}{3}\int_{\Omega^{\prime}} \! \rho^{2k} |D_{j}D^{k}\ue |^{2} +\frac{c_{k}\teps}{(s-r)^{2k}}
\int_{B_{2R}} \! \sum_{i=0}^{k-1} |D^{i}D_{j}\ue |^{2}
\end{eqnarray*}
for a (new) constant $c_{k}$. Likewise,
$$
III_{2} \leq \frac{\teps}{3}\int_{\Omega^{\prime}} \! \rho^{2k} |D_{j}D^{k}\ue |^{2} +\frac{c_{k}\teps}{(s-r)^{2k+2}}
\int_{B_{2R}} \! \left(\sum_{i=0}^{k-1} |D^{i}D_{j}\ue |^{2}+|D^k\ue|^2\right),
$$
where we remark that the increased power of the factor $(s-r)$ is due to the presence of an additional
$D_{j}$-derivative on $\rho^{2k}$ in $III_2$. Collecting the above bounds and adjusting the constant $c_k$ we
arrive at
\begin{equation}\label{iv}
III \leq -\frac{\teps}{3}\int_{\Omega^{\prime}} \!\rho^{2k} |D_{j}D^{k}\ue |^{2} +\frac{c_{k}\teps}{(s-r)^{2k+2}}
\int_{B_{2R}} \! \left(\sum_{i=0}^{k-1} |D^{i}D_{j}\ue |^{2}+|D^k\ue|^2\right).
\end{equation}
Inserting the bounds (\ref{i}), (\ref{ii}),  (\ref{iv}) in \eqref{1to4} and using the properties of $\rho$
we get for each $1 \leq j \leq n$:
\begin{eqnarray*}
&&\frac{1}{4}\int_{\Omega^{\prime}} \! \rho^{2k}\wei^{p-2}|D_{j}D\ue |^{2}+ \frac{\teps}{3}\int_{\Omega^{\prime}} \!\rho^{2k} |D_{j}D^{k}\ue |^{2}\\
&&\leq \frac{c(p,k)}{(s-r)^{2}}\int_{B_{s}\setminus B_{r}} \! \wei^{2q-p} + \frac{c\teps}{(s-r)^{2k+2}}\int_{B_{2R}} \!\left(
\sum_{i=0}^{k-1} |D_{j}D^{i}\ue |^{2}+|D^k\ue|^2\right).
\end{eqnarray*}
Adding up these inequalities over $j \in \{ 1, \, \dots \, , \, n \}$ and adjusting
the constants we arrive at
\begin{eqnarray}
& &\int_{\Omega^{\prime}} \! \rho^{2k}\wei^{p-2}|D^2\ue |^{2} +
\frac{4\teps}{3}\int_{\Omega^{\prime}} \! \rho^{2k}|D^{k+1}\ue |^{2}\nonumber\\
&&\leq \frac{c(n,p,k)}{(s-r)^{2}}\int_{B_{s}\setminus B_{r}}  \wei^{2q-p} +
 \frac{A(\eps )}{(s-r)^{2k+2}},
\end{eqnarray}
where   $A(\eps )$ is
independent of $r$, $s$ and where, by a suitable version of the Gagliardo--Nirenberg interpolation inequality,
$$
 A(\eps ) \to 0 \quad \mbox{ as } \quad \eps \searrow 0.
$$
Omitting the second term, involving $(k+1)$-th order derivatives, on the left--hand side, the above inequality reduces to
\begin{equation}\label{hek9}
\int_{\Omega^{\prime}} \! \rho^{2k}\wei^{p-2}|D^2\ue |^{2} \leq
\frac{c(n,p,k)}{(s-r)^{2}}\int_{B_{s}\setminus B_{r}}  \wei^{2q-p}  + \frac{A(\eps )}{(s-r)^{2k+2}}.
\end{equation}
Now, taking into account estimate \eqref{prelim}, an elementary calculation implies that
\begin{equation*}\left|D\left(\rho^k V(D\ue)\right)\right|^{2}
\le c(p)\left[\rho^{2k}\wei^{p-2}|D^2\ue|^2+  k^2\rho^{2k-2}|D\rho|^2|V(D\ue)|^{2}\right],
\end{equation*}
Therefore, by virtue of estimate \eqref{hek9} and by the Sobolev Embedding Theorem, we obtain
\begin{eqnarray}\label{hek10}
&&\left(\int_{\Omega^{\prime}} \! \left|\rho^k V(D\ue)\right|^{\frac{2n}{n-2}} \right)^{\frac{n-2}{n}}\le
c\int_{\Omega^{\prime}} \! \left|D\left(\rho^k V(D\ue)\right)\right|^{2} \cr\cr
&\leq&
\frac{c(n,N,p,k)}{(s-r)^{2}}\int_{B_{s}\setminus B_{r}}  \wei^{2q-p}
+ \frac{c(n,N,p,k)}{(s-r)^{2}}\int_{B_{s}\setminus B_{r}}  |V(D\ue)|^{2}+ \frac{A(\eps )}{(s-r)^{2k+2}}\cr\cr
&\leq&
\frac{c(n,N,p,k)}{(s-r)^{2}}\int_{B_{s}\setminus B_{r}\cap\{|D\ue|\le \mu\}}  \wei^{2q-p}+\frac{c(n,N,p,k)}{(s-r)^{2}}
\int_{B_{s}\setminus B_{r}\cap\{|D\ue|> \mu\}}  \wei^{2q-p}\cr\cr
&&  +\frac{c(n,N,p,k)}{(s-r)^{2}}\int_{B_{s}\setminus B_{r}}  |V(D\ue)|^{2}+ \frac{A(\eps )}{(s-r)^{2k+2}}\cr\cr
&\leq&
\frac{c(n,N,p,k,\mu)}{(s-r)^{2}}\int_{B_{s}\setminus B_{r}}  (1+|V(D\ue)|^2)^{\frac{2q-p}{p}}
+ \frac{c(n,N,p,k)}{(s-r)^{2}}\int_{B_{s}\setminus B_{r}}  |V(D\ue)|^{2}\cr\cr
&& +\frac{A(\eps )}{(s-r)^{2k+2}}
\end{eqnarray}
 We can write
$$
\frac{p}{2q-p} = \frac{\theta}{\frac{n}{n-2}}+\frac{1-\theta}{\frac{q}{p}},
$$
where, since $p<q<p\frac{n}{n-1}$,
$$
\theta = \frac{q-p}{2q-p}\times \frac{pn}{pn-q(n-2)} \in (0,1)
$$
(note that the case $p=q$ doesn't require these arguments).
H\"{o}lder's inequality yields
$$\int_{B_{s}\setminus B_{r}} |V(D\ue)|^{\frac{2(2q-p)}{p}}\le \left(\int_{B_{s}\setminus B_{r}}
|V(D\ue)|^{\frac{2n}{n-2}}\right)^{\frac{\theta(n-2)}{n}\frac{2q-p}{p}}\left(\int_{B_{s}\setminus B_{r}}
|V(D\ue)|^{\frac{2q}{p}}\right)^{\frac{(1-\theta)(2q-p)}{q}}$$
Inserting the previous inequality in \eqref{hek10}, we obtain
\begin{eqnarray}\label{hek11}
&&\int_{\Omega^{\prime}} \! \left|\rho^k V(D\ue)\right|^{\frac{2n}{n-2}}\cr\cr
&\leq&
\frac{c(n,N,p,k,\mu)}{(s-r)^{\frac{2n}{n-2}}}\left(\int_{B_{s}\setminus B_{r}}
|V(D\ue)|^{\frac{2n}{n-2}}\right)^{\theta\frac{2q-p}{p}}\left(\int_{B_{s}\setminus B_{r}}
|V(D\ue)|^{\frac{2q}{p}}\right)^{\frac{(1-\theta)(2q-p)}{q}\frac{n}{n-2}}\cr\cr
&& +\frac{c(n,N,p,k)}{(s-r)^{\frac{2n}{n-2}}}\left(\int_{B_{s}\setminus B_{r}}
|V(D\ue)|^{2}\right)^{\frac{n}{n-2}}+ \frac{c(n,N,p,k,\mu)}{(s-r)^{\frac{2n}{n-2}}}|B_{s}\setminus B_{r}|^{\frac{n}{n-2}}\cr\cr
&& +\frac{\tilde A(\eps )}{(s-r)^{\frac{(2k+2)n}{n-2}}}
\end{eqnarray}
where we set $\tilde A(\eps)=(A(\eps))^{\frac{n}{n-2}}$. Since
$$
\frac{2q-p}{p}\theta=\frac{(q-p)n}{pn-q(n-2)}<1
$$
for $q<\frac{pn}{n-1}$, it is legitimate to apply Young's inequality with the pair of conjugate exponents
$$
d= \frac{pn-q(n-2)}{(q-p)n}\qquad \mbox{ and } \qquad d'=\frac{1}{2}\frac{pn-q(n-2)}{pn-q(n-1)}
$$
in the second line of \eqref{hek11}, thus getting
\begin{eqnarray}\label{hek12}
&&\int_{B_r} \! \left|V(D\ue)\right|^{\frac{2n}{n-2}}\le\int_{\Omega^{\prime}} \! \left|\rho^k V(D\ue)\right|^{\frac{2n}{n-2}}\cr\cr
&\leq&\frac{1}{2}\left(\int_{B_{s}\setminus B_{r}}  |V(D\ue)|^{\frac{2n}{n-2}}\right)+
\frac{c(n,N,p,k,\mu)}{(s-r)^{d'\frac{2n}{n-2}}}
\left(\int_{B_{s}\setminus B_{r}} |V(D\ue)|^{\frac{2q}{p}}\right)^{\frac{(p-q)n+2q-p}{pn-q(n-1)}\frac{n}{n-2}}\cr\cr
&& +\frac{c(n,N,p,k)}{(s-r)^{\frac{2n}{n-2}}}\left(\int_{B_{s}\setminus B_{r}}
|V(D\ue)|^{2}\right)^{\frac{n}{n-2}}+ \frac{c(n,N,p,k,\mu)}{(s-r)^{\frac{2n}{n-2}}}|B_{s}\setminus B_{r}|^{\frac{n}{n-2}}\cr\cr
&& +\frac{\tilde A(\eps )}{(s-r)^{\frac{(2k+2)n}{n-2}}}
\end{eqnarray}
As this estimate is valid for all radii $R\leq r < s \leq 2R$, we can apply the
hole--filling method of Widman. This yields in the usual manner
\begin{eqnarray}\label{hek13}
&&\int_{B_R} \! \left|V(D\ue)\right|^{\frac{2n}{n-2}}\le
\frac{c(n,N,p,k,\mu)}{R^{d'\frac{2n}{n-2}}}\left(\int_{B_{2R}}  |V(D\ue)|^{\frac{2q}{p}}\right)^{\frac{(p-q)n+2q-p}{pn-q(n-1)}\frac{n}{n-2}}\cr\cr
&& +\frac{c(n,N,p,k)}{R^{\frac{2n}{n-2}}}\left(\int_{B_{2R}}  |V(D\ue)|^{2}\right)^{\frac{n}{n-2}}+
c(n,N,p,k,\mu)R^n+ \frac{\tilde A(\eps )}{R^{\frac{(2k+2)n}{n-2}}}
\end{eqnarray}
From estimate \eqref{hek13}, through the higher integrability of Theorem \ref{th1}, it follows that
$V(D\ue)$ is bounded in $\LL^{\frac{2n}{n-2}}(B_{R}, \M )$ uniformly as $\varepsilon \searrow 0$ and
so, by the arbitrariness of the ball $B_{2R}(x_{0}) \subset \Omega^{\prime}$
and a simple covering argument, we conclude that $V(D\ue)$ is bounded in
$\LL^{\frac{2n}{n-2}}_{\rm loc}(\Omega^{\prime},\M )$. In view of (\ref{prelim}) and
(\ref{hek9}) it then also follows that $(V(D\ue ))$ is bounded in
$\WW^{1,2}_{\rm loc}(\Omega^{\prime}, \M )$ uniformly as $\varepsilon \searrow 0$.
Therefore, we conclude  by passing to the limits as $\eps \searrow 0$, using also compactness of the
Sobolev embedding on the right--hand side and Fatou's Lemma on the left--hand side, that
\begin{eqnarray}\label{hek14}
&&\int_{B_R} \! \left|V(Du)\right|^{\frac{2n}{n-2}}\le
\frac{c(n,N,p,\mu)}{R^{d'\frac{2n}{n-2}}}\left(\int_{B_{2R}} |V(Du)|^{\frac{2q}{p}}\right)^{\frac{(p-q)n+2q-p}{pn-q(n-1)}\frac{n}{n-2}}\cr\cr
&& +\frac{c(n,N,p)}{R^{\frac{2n}{n-2}}}\left(\int_{B_{2R}}  |V(Du)|^{2}\right)^{\frac{n}{n-2}}+ c(n,N,p,\mu)R^n
\end{eqnarray}
and
\begin{eqnarray}\label{hek15}
&&\int_{B_R} \! \left|D(V(Du))\right|^{2}\le
\frac{c(n,N,p,\mu)}{R^{d'\frac{2n}{n-2}}}\left(\int_{B_{2R}} |V(Du)|^{\frac{2q}{p}}\right)^{\frac{(p-q)n+2q-p}{pn-q(n-1)}\frac{n}{n-2}}\cr\cr
&& +\frac{c(n,N,p)}{R^{\frac{2n}{n-2}}}\left(\int_{B_{2R}}  |V(Du)|^{2}\right)^{\frac{n}{n-2}}+ c(n,N,p,\mu)R^n
\end{eqnarray}
\end{proof}

\noindent
{\bf Acknowledgments}  Parts of the research were done while MC and APdN were
visiting the Oxford Centre for Nonlinear PDE, and while JK was visiting Dept.~Maths.
`R.~Caccioppoli' in Napoli. We wish to thank both institutions for financial support and
hospitality. The work was supported by the EPSRC Science and Innovation award to the Oxford
Centre for Nonlinear PDE (EP/E035027/1), and JK was also partially supported by ERC grant
207573 `Vectorial Problems'.

\noindent
Universit\`{a} del Sannio, Piazza Arechi II - Palazzo De Simone, 82100 Benevento, Italy

\noindent
{\em E-mail address}: carozza@unisannio.it
\medskip

\noindent
Mathematical Institute, University of Oxford, 24--29 St.~Giles', Oxford OX1 3LB, England

\noindent
{\em E-mail address}: kristens@maths.ox.ac.uk
\medskip

\noindent
Universit\`{a} di Napoli `Federico' Dipartimento di Mat.~e Appl. 'R.~Caccioppoli',
Via Cintia, 80126 Napoli, Italy

\noindent
{\em E-mail address}: antpassa@unina.it

\end{document}